%Paper on BK and other inequalities, with Alberto Gandolfi, February 4  2012
\documentclass[12pt]{article}

\usepackage{eufrak}
\usepackage{amsfonts}
\usepackage{amsmath}
\usepackage{amssymb}
\usepackage{amsthm}

\usepackage[makeroom]{cancel}

\usepackage{tikz}
\usetikzlibrary{matrix}

%to get appendix%
\usepackage[titletoc]{appendix}
\usepackage{titlesec}

\def\Ph1{P^{(h_1)}}
\def\Ph2{P^{(h_2)}}

\def\b0{{\bf 0}}

% Defs toegevoegd voor cp paper:
\def\bnu{\bar\nu}

\def\1lbnu{{\bnu}_{\lambda_1}}
\def\2lbnu{{\bnu}_{\lambda_2}}

\newtheorem{thm}{Theorem}[section]

\newtheorem{lem}[thm]{Lemma}
\newtheorem{cor}[thm]{Corollary}

\newtheorem{ex}{Example}

\theoremstyle{plain}

\newtheorem{defn}[thm]{Definition}

\newtheorem{rem}[thm]{Remark}

\newtheorem{incsol}[thm]{Incorrect Solution}

\newtheorem{impsol}[thm]{Improved Solution}

\newtheorem{compsol}[thm]{(Complete) Solution}

%to get appendix
\titleformat{\section}{\bfseries}{\thesection.}{0.5em}{}
\titleformat{\subsection}{\normalfont\itshape}{\thesubsection.}{0.5em}{}
\titleformat{\subsubsection}{\normalfont\itshape}{\thesubsubsection.}{0.6em}{}

\begin{document}

\title{Probability and Hilbert's VI problem.}

\author{ A. Gandolfi\footnote{ NYU Abu Dhabi
and Dipartimento di Matematica e Informatica U. Dini, Universit\`a di Firenze, Firenze} \\
{\footnotesize email: ag189@nyu.edu  - 
gandolfi@math.unifi.it}
}

%  \\  ag189@nyu.edu \\     NYU Abu Dhabi \\
%{\small CWI
%\footnote{CWI, Science Park 123, 1098 SJ Amsterdam, The Netherlands}
%and VU University Amsterdam, and Chalmers University of Technology and Gothenburg
%University} \\
%{\footnotesize email: J.van.den.Berg@cwi.nl, jonasson@chalmers.se}
%}

\date{}
\maketitle

\begin{abstract}

This work has been prompted by the surprising lack of mathematical 
coherence in the common usage of some of the fundamental entities in
the theory of probability, with an inherent risk of contradiction.
While disentangling the intricacies, we realized that the same
issue has been raised many times, with only partial solutions, notably
by Boole, Hilbert, De Finetti and Renyi, among others. In particular,
a restoration of foundational coherence in the usage of probability theory 
%an answer to this question
 appears to be a missing piece in the solution of
Hilbert VI problem.

Here we solve the problem by  a new formalization of probability theory
based on a  minimal collection of axioms with additional 
context dependent  conditions, whose overall consistency is then
semantically verified.
In Elementary Probability, i.e. probabilities involving boolean combinations of
 finitely many events, our theory leads to algebraization and, using Tarski Seidenberg
 reduction, to a proof of decidability of all problems. 
 Inconsistency in Elementary Probability, on the other hand, is equivalent to, 
suitably redefined, arbitrage or Dutch Book. In the continuous case this leads
to nonstandard analysis.

\end{abstract}
{\it Key words and phrases:} probability, discrete probabilities, conditional probabilities,
independence, moment problems, finite additivity, existence theorems, Hilbert, Boole, De Finetti,
Kolmogorov, axioms,  model theory, consistency, elementary probability,
Tarski Seidenberg, positivstellensatz, Dutch Books, joint normals. \\
{\it AMS subject classification:} 60C05, 60K35.

\newpage

\tableofcontents

\section{Introduction}\label{intro}

There is a long history in the search of a theory of probabilties
 see e.g. \cite{TF1973, VP94}, and Hilbert VI problem
\cite{LC} calls for its axiomatization; this has been generally interpreted,
most of the time by Hilbert himself, as the quest for a collection of few axioms from 
which the rest of the theory can be derived. Kolmogorov \cite{K}
proposed one such axiom system which, although disputed by De Finetti and others, has lead
to a clarification of the foundations, and has become the standard accepted solution.

Yet, there are aspects which, surprisingly, have been  mostly overlooked to this day. They concern a
 lack of mathematical coherence in most of the 
applications and exercises involving probability theory. Fundamental concepts, among which
independence and conditional probabilities, are presented and used in two inconsistent
ways: in the theory, they are introduced as definitions, but in the applications
they are unfoundedly taken as assumptions. Some confusion about the role of the main probabilistic concepts was 
 recognized by Hilbert, who in 1905
 indicates that 
"at its present state of development, the "axioms" and the "definitions" somewhat overlap with each other" \cite{LC, UK2011, DH1}. 
The "overlap" has never ceased in applications of probability.
In addition, this confusion  spawns 
a potential risk of contradiction, as illustrated, for instance, by the exercise in Appendix
\ref{Contradictory}; the consequences of inconsistency could obviously be quite severe in applied
contexts, as unwarranted conclusions, for instance about safety,
 might be drawn from contradictory assumptions. 
 
 There have been several other calls and attempts at formalizing probability
theory, notably by Bohlmann \cite{GB}, Keynes \cite{Ke},
 Savage, Popper \cite{Po},
Renyi \cite{R, K84}, and theories like Quantum Probability \cite{RS2007, Pit1989b},
Free Probability \cite{B03} and Bayesian Probability, raising diverse issues such as
 the  use
of sets in Kolmogorov
axioms, the significance of countable additivity, and again the role
of independence and conditional probabilities; none of these seems to be
complete satisfactory. In parallel, the problem  of
potential contradictions is 
 explicitly mentioned
 in the works of Boole \cite{B54, Ha2} and De Finetti \cite{DF2, DF3}, and later in
PSAT \cite{ N, HJ}; but these last researches consider only  the linear cases,  
and hence cannot deal with concepts like
 independence.  To conclude, several paradoxical statements have also been proposed 
\cite{E2012, Ha, Lyon(2014)}, often intertwined with the same lack
of coherent usage of the basic concepts. The need of a formulation which
 is able to deal with  possible inconsistencies and  other issues 
 seems then
 to still be a missing piece in the solution of Hilbert VI problem.

The possibility of inconsistencies casts a different light into the quest
for axiomatization of probability. In fact, a new system of axioms is created every
time a new problem is considered, but many problems in the analysis of random phenomena
are
so immediate  that the need of a
consistency check seems to be missed during the mathematical formalization. In addition,
the foundations of probability theory proposed so far, and, even more, the overall idea
that axiomatization is aimed at finding a small collection of far reaching axioms,
offer no tool to prevent inconsistencies in applied problems. Indeed, no axiomatization prevented
the exercise reported
in Appendix \ref{Contradictory}  from being considered appropriate.

We seem, therefore, compelled to assign an additional task to the 
axiomatization of probability; in essence, we need a flexible system 
which is able to adapt to single problems, indicating both how
to prevent inconsistencies and how to preserve the calculative power of 
probabilistic concepts. This is problem we treat in this paper by
proposing a new formalization.
We see below that in such formalization concepts like independence and conditional probabilities
end up consistently playing a dual role, acting both as
 assumptions, whose consistency has to be checked, 
and  as definitions, which are the starting points of calculations.
As a matter of fact, also additivity is revealed to posses the same type of duality.

At first, the idea was a semantical consistency check: once the hypothesis of a
problem have been identified, one has to look for a probability space
satisfying all the hypothesis, showing thereby a relative consistency
(absolute consistency is essentially ruled out by G\"odel's
second Incompleteness Theorem \cite{G}). This is
the procedure suggested by Model Theory, also at the basis of
moment problems and PSAT. In Section \ref{Elementary probability}
we develop this direction 
by introducing an algebraization of Elementary Probability
which ultimately leads to show its decidability
 (a result which seems to  fulfill Boole's original claim of having 
a way of solving "all problems in probability"  \cite{Ha2}). 
As PSAT is a special case of the algebraic problem we formulate in Elementary
Probability, which could be 
called PPSAT (Polynomial PSAT), this too in NP-complete.

We realized, however, that, in pursuing the above direction, the specific assumptions of each
problem and the usual axioms of a probability space end up being treated in the same way
(we then name them all "requirements").
This offers  the chance to relax the standard axioms, allowing
parts of them to become context dependent. This is done in the paper
by starting from very  basic probability spaces (related to plausibilities in quantum context), and then introducing
the notion of "jointly perceivable" events, a notion whose treatment ends up paralleling 
 that of the standard collective independence. Once the two notions are employed together
 one can give a coherent foundation to diverse formulations of probability,
 each one being identified by some requirements which are constantly
 taken within that formulation, with additional ad hoc requirements
 in each problem. This is described in Section 
 \ref{Probability}. Section \ref{Elementary probability}  then
goes back to Elementary Probability and its algebraization.
 
 \bigskip
 
 In Elementary Probability, we see that if there is no model satisfying
 all the requirements of a problem, then one can 
 determine a suitably redefined arbitrage mechanisms, or Dutch Book
 (see Section \ref{PSDB}).
This generalizes the foundational work of De Finetti,
and the  Fundamental Theorem of Asset Pricing \cite{DS08}.
The construction is based on Stengle's Positivstellensatz \cite{LPR},
and shows that the assumptions of a problem are consistent if and only if, 
provided some replicability of the events,
it is not possible to extract a sure profit from a believer 
of those requirements. 
Outside of Elementary Probability the Dutch Book method
encounters some difficulties, as its absence  is no longer equivalent
to existence of a model, see Section \ref{MP}; 
 this phenomenon is known in other contexts,
 and seems to require either nonstandard analysis \cite{HL85}
 or extensions of the concept of arbitrage \cite{DS08}.

\bigskip

Our proposed method entails several questions about logic. 
Following Model Theory  \cite{TZ, E}, we need to identify a formal language, a class of structures
and correspondence rules; in addition 
a truth predicate \cite{T44} would be needed to ascertain satisfiability.
 As  there  does not seem to be
an optimal  choice for the language (see, e.g.
\cite{Va}), it appears more reasonable that in our context the language 
  itself is chosen in relation to the requirements,
  allowing the flexibility of selecting a rich model and proof theory for simple 
  problems, and a more expressive language
  for more elaborate ones. We do not pursue these considerations further in the
  present paper.

\bigskip

Summarizing, our proposal, which is to a large
 extent just a formalization of commonly used procedures, is that
 the mathematical analysis of probabilities
 should be reversed:
 instead of looking for axioms which capture as many 
 situations as possible, one can (quite freely) select a collection of
 assumptions (i.e. axioms) for each specific problem
 and then derive consequences from there, with the sole additional 
 constraint of a preliminary consistency check  (via existence of a model).

 Notice that, along the way of our formalization, we also forgo the need of
 having a preliminarily  fixed set, a desideratum which has
 been raised by several authors such as Keynes \cite{Ke}
or Popper \cite{Po}, and, in some form, by Tao's ansatz 
  \cite{T}. In addition, although we do not present the details here, 
  it is clear that our treatment allows to make a parallel development
  of various formulations of probability theory, and also of
  some
  theories which are close to that of probability, such as
  Choquet's Capacity
or Shafer's Evidence \cite{D}.  

\vskip 1truecm
 
Throughout the paper boldface symbols such as
${\bf x} = (x_1, x_2, \dots, x_k)$
 indicate vectors whose coordinates are  clear from the context;
$\delta_A$ is the Kronecker delta function of  $A$.

On first reading, it is possible to focus on Elementary Probability
by going directly to Appendix \ref{Contradictory} and
Section  \ref{Elementary probability}.

\section{Probability}\label{Probability}

\subsection{Requirements}

All requirements will be set on equal footing, but we single out
a minimal collection which serves as a basis for the entire theory.

\begin{defn}
A  {\bf  basic probability space} is  a triple $(\overline \Omega, 
\overline{\mathbb A}, \overline P)$ where $\overline \Omega$ is a set,
$\overline{\mathbb A}$ is a family of subsets of $\overline \Omega$
containing $\overline \Omega$ and $\emptyset$,
and $\overline P$ is a real valued function on $\overline{\mathbb A}$ such that 
 \begin{itemize}
\item[(a)] $\overline P(\emptyset)= 0 $;
\item [(b)]  $\overline P(\overline \Omega)=1 $;
\item[(c)] for every $\overline A \subseteq \overline B \subseteq \overline \Omega$,
$\overline P(\overline A) \leq \overline P(\overline B)$.
\end{itemize}
Elements $\overline A$ of $\overline{\mathbb A}$ are called basic events,
and $\overline P $ is called basic probability.
\end{defn}
That these assumptions are not contradictory can be seen with 
$\overline P(\overline A)=\delta_{\overline A,\overline \Omega}$
on any set $\overline \Omega$.

Note that basic probabilities appear as "plausibilities" in quantum contexts
\cite{F86, FL15}.
Note also  that we decorate the symbols with a hat as they represent
"concrete" structures, i.e. sets.

 \medskip
 
 Standard axiomatizations of Probability Theory identify more axioms, but, as mentioned, we are incorporating any further  
 assumption with the  specific case by case ones; we name them all
 requirements.

\begin{defn} \label{Requirement}
A   {\bf requirement} is any statement which can hold for a basic probability space.
\end{defn}
Initial examples of requirements are  $P(A)=1/2$ or
 there are finitely many events; later, when the theory is developed,
 requirements take more elaborate forms like constraints on moments, 
 a random variable being a martingale, or a stochastic process
 satisfying a SPDE.
  Notice that here we use symbols without bar, to express the fact
that  requirements are stated before  specific basic probability space 
or random variables are determined.

The interest is in collection of requirements:
\begin{defn} \label{FirstDefinPreEnvir}
A   {\bf  probability pre-environment } is a quadruple \\
$(( \Omega, 
{\mathbb A},  \mathcal P), \mathcal R)$, in which $\Omega$ is a symbol;
$\mathcal P$ is a set of  symbols containing at least $P$; 
$\mathbb A$ is a set of symbols containing at least 
$\emptyset$ and $\Omega$; $\mathcal R$ 
is a collection of requirements about the symbols in $( \Omega, 
{\mathbb A},  \mathcal P)$.
\end{defn}

Examples of probability pre-environments appear
everywhere
in the usual development of probability theory,
both at abstract levels as assumptions of a theorem, and
in problems as collections of hypothesis.

In fact, once a pre-environment is described, consequences can
be derived by a deductive calculus. This is the usual modus operandi
both for theoretical developments and for applications of 
probability theory.

Requirements are then stratified, in the sense that once deductions
are drawn from some requirements, further concepts can be determined
which become the basis of new requirements. For instance, 
one typical requirement is that $\mathbb A$ is a $\sigma$-algebra,
that there is a function $X:\Omega \rightarrow \mathbb R$ measurable
with respect to $\mathbb A$; if some additivity is required for $P$ 
 then one can define integration with respect to $P$,
and then require certain properties for the moments of $X$. 
A similar process takes place with independence (see also below).
In practice, the introduction of  of requirements and
pre-environments can be seen as a merely terminological clarification of
the standard probability theory.

Our new formalization points, however, directly to the fact
that all the deductive effort could be groundless if a contradiction
is present among the requirements. The next section gives a semantic
interpretation which closes the
circle of our definitions and insures consistency; later on we discuss
what can happen with inconsistency.

\bigskip
\subsection{Probability environments} \label{Probability environments}

The following definition provides at the same a more meaningful constraint on the type of
requirements which can appear in a probability pre-environment, and
model theoretical consistency.
\begin{defn} \label{FirstDefinEnvir}
A   {\bf  probability environment } is a probability \\
pre-environment $(( \Omega, 
{\mathbb A},  \mathcal P), \mathcal R)$
such that there exists a basic probability space
$(\overline \Omega, 
\overline{\mathcal A}, \overline P)$  satisfying all the requirements.
\end{defn}
A more precise description of how the constraint
are to be satisfied involves an interpretation of the
symbols in the pre-environment in terms of elements of the basic probability
space. This depends on the type of logic; a schematic description is as follows.
 First, $\overline{\mathcal A}$ contains one element for each member of $\mathbb A$; next,
if,  in each of the requirements in $\mathcal R$,
$\overline \Omega$ replaces $\Omega$, 
the corresponding members in $\overline{\mathcal A}$  replace those in $\mathbb A$,
 $\overline P$ replaces $P$,
and each other symbol in $\mathcal P$ is replaced by that of a mathematical entity defined in terms of
$(\overline \Omega, 
\overline{\mathcal A}, \overline P)$, then
the requirements in $ \mathcal R$ hold.
In such case, the elements of ${\mathbb A}$ are
 events, $P$ is a probability,  the basic probability space is called a 
(probability) model
for the environment. We indicate by $\Psi$ a map which realizes the
above correspondence.

Notice that in the above definition
events are not sets, the probability is not a function etc.

When consistency of a probability environment is ascertained, then
consequences can be consistently derived  by inference rules. 
The model theoretical determination of consistency introduces also 
the possibility of a semantic sequent calculus. 
We say that a statement is a {\bf possible consequence} of
a probabilistic environment   if the 
statement holds for  at least one of the basic probability spaces satisfying the requirements.
A statement is a {\bf necessary consequence} if 
it holds for all the basic probability spaces satisfying the requirements.
Any theorem in standard probability theory is a
necessary consequence of any environment in which the hypotheses of the
theorem itself are (a necessary consequence of the) requirements.

A particular model theoretical proof method consists in showing by 
model existence that a certain probability environment $(( \Omega, 
{\mathbb A},  \mathcal P), \mathcal R)$, exists; and then
proving that $(( \Omega, 
{\mathbb A},  \mathcal P), \mathcal R')$ is contradictory, where
$\mathcal R'$ equals $\mathcal R$ plus the negation of a statement
$s$. It follows that $s$ is a necessary consequence of the 
environment. When applied to Elementary Probability in Section \cite{Elementary}
below, this leads to a complete solution.

 \begin{ex} \label{UnifDist} The Uniform Distribution on $n$ points is a 
 Probability Environment
in which the requirements can be taken to be:
$\Omega=\{a_1, \dots, a_n\}$; 
$\mathbb A$ contains the $n+2$  symbols $\{\emptyset, \{a_1\}, \dots, \{a_n\},
\Omega\}$; $P$ is defined on a $\sigma$-algebra with all events being
 jointly perceivable; finally,  $P(\{a_k\})= c$, for each $\{a_k\} \in \mathbb A$  and some constant $c$.
To verify that this is indeed a Probability Environment it is enough to
take, for instance, $\overline \Omega:=\{1, \dots, n\},
\overline{\mathcal A}:= \mathcal P (\overline \Omega),$
$\overline P (\overline A) := |\overline A|/n$, and 
replace each $\{a_k\}$ by  $\{k\} $.

Alternatively: no requirements on $\Omega$;
$\mathbb A$ contains (at least) the $n+2$ symbols
$\emptyset, A_1, \dots, A_n, \Omega$;  $A_i \cap A_j = \emptyset$;
and $P(A_i)= P(A_j)$ for all $i \neq j$, $i,j=1,\dots,n$.
The concrete probability space above is again a model of the environment, 
with the replacement of $A_k$ by $\{k\} $
(this second formulation satisfies Tao's dogma about extendibility \cite{T}). 
\end{ex}

\subsection{Joint perceivability and mutual independence}

Specific requirements can be imposed for each different problem, but 
there are
standard ones, such as countable additivity or independence, 
which set probability theory apart from other theories. 
The imposition of such requirements is 
 facilitated by suggestive definitions.
This has always been the case with independence, which 
as mentioned plays the role of a requirement in applications,
and we now 
 introduce
a novel notion for additivity; among other things, 
it brings about the potential to unify diverse 
formulations of probability theory.

\begin{defn} \label{JointlyPerceivable}
In a probability environment $(( \Omega, 
{\mathbb A},  \mathcal P), \mathcal R)$,
a collection of events $\mathfrak {A}=\{ A_i\}_{ i \in \mathcal I}, 
A_i \in \mathbb A$, 
is {\bf jointly perceivable} (for  $P$) if $P$
is countably additive on 
$\sigma(\mathfrak {A})$, i.e
for every countable
subcollection of disjoint events $A_i \in \sigma(\mathfrak {A})$,
$ P(\cup_{ i =1}^{\infty} A_i)=
\sum_{  i =1}^{\infty}   P(  A_i)$. The events in $\mathfrak {A}$
are also called jointly perceivable.

\end{defn}
Notice that with this definition, additivity, now renamed joint perceivability, 
plays both the role of an assumption and that of a definition, as much
as independence is now doing. This parallelism is further developed here below.

When the requirements on $(\Omega, \mathcal A, P)$ are that $\mathcal A$ is a
$\sigma$-algebra of  jointly perceivable events, we say that
$(\Omega, \mathcal A, P)$ is a {\bf Kolmogorov probability space}.
That these requirements are consistent, and hence define a probability 
environment, can be verified by taking any $\overline \Omega$, 
$\overline {\mathcal A} =\{\overline \Omega, \emptyset\}$ and $\overline P=\delta_{\overline \Omega}$ 
(the semantic consistency check is essentially verabatim cited  from \cite{K}).

When the requirements on $(\Omega, \mathcal A, P)$ are that $\mathcal A$ is a
algebra and that all finite collections of events are jointly perceivable, we say that
$(\Omega, \mathcal A, P)$ is a {\bf finitely additive probability space}.

In other cases joint perceivability might hold for some but not for all finite 
 collections of events. This is the case in 
test spaces which appear in Quantum Mechanics 
\cite{FR72, W09, FL15}
as well as in other contexts, e.g. \cite{C10}.

The notion of joint perceivability has been phrased in a way that makes 
it comparable to the slightly adapted usual one of mutual independence.
\begin{defn} \label{Independence}
In a probability environment $(( \Omega, 
{\mathbb A},  \mathcal P), \mathcal R)$,
a collection of events $\mathfrak {A}=\{ A_i\}_{ i \in \mathcal I}, 
A_i \in \mathbb A$ 
is {\bf mutually independent} if for all disjoint classes $\mathfrak A_i$,
$i \in I$, $I$ any set of indices, $\mathfrak A_i \subseteq \mathfrak A$,
$P$ is countably moltiplicative on the product
$\otimes   _{i \in I} \sigma(\mathfrak A_i)$, i.e
for every countable
collection of events $A_i \in  \sigma(\mathfrak {A_i})$,
$ P(\cap_{ i =1}^{\infty} A_i)=
\prod_{  i =1}^{\infty}   P(  A_i).$ The events $A_i$ are called
mutually independent.
\end{defn}

To illustrate the parallelism between the concepts of joint perceivability and
mutual independence, we call finitely jointly perceivable a collection $\mathfrak A$
of
events in which additivity holds for all finite collections of elements of $\sigma(\mathfrak A)$,
 and
finitely mutually independent a collection $\mathfrak A$ for which 
factorization occurs for all finite products $\otimes_{i=1}^n \sigma(\mathfrak A_i)$
of disjoint collections $\mathfrak A_i \subset \mathfrak A$.

In some cases  both joint perceivability and mutual independence are finite:
take $\Omega = \mathbb N$ and $\mathfrak U$  an ultrafilter; then $P(A) =\delta_{\mathfrak U} (A)$
is both finitely mutually independent and finitely jointly perceivable on
$\mathfrak A = \mathcal P(\mathbb N)$, but neither is countable.
However, if one is countable and the other  finite, then the other is countable too.

\begin{thm} Let $(\Omega, \mathbb A, P, \mathcal R)$ be a probability
environment, and $\mathfrak A \subseteq  \mathbb A$.
If $\mathfrak A$ is jointly perceivable and finitely mutually independent, then
it is also (countably) mutually independent.

If $\mathfrak A$ is mutually independent and finitely jointly perceivable, then
it is also  (countably) jointly perceivable.
\end{thm}
\begin{proof}

(I) As $(\Omega, 
\sigma(\mathfrak A), P) $ is Kolmogorov,
the first statement follows from   standard probability theory (see, for instance, \cite{K02} pp. 51, 60).

\bigskip

(II) In the other direction, let $A^0=A^c$ and $A^1=A$, and consider 
$$
\mathcal F = \{\cap_{\ell=1}^k A_{\ell}^{\alpha_{\ell}} ,
k \in \mathbb N, A_j \in \mathbb Q, \alpha_j =0,1\} \cup \{\emptyset, \Omega\}.
$$
Clearly, $A, B \in \mathcal F $ implies $A\cap B \in  \mathcal F $;
$A=\cap_{\ell=1}^k A_{\ell}^{\overline \alpha_{\ell}}
\in \mathcal F$ implies $A^c= \cup_{\boldsymbol{ \alpha} \neq \boldsymbol{\overline \alpha} }
\cap_{\ell=1}^k A_{\ell}^{\alpha_{\ell}}$;
hence, $\mathcal F$ is a semialgebra containing $\mathfrak A$
and $P$ is finitely additive on $\mathcal F$.

To show that $P$ is countably additive on $\mathcal F$ consider $A \in \mathcal F$
such that $\cap_{\ell=1}^k A_{\ell}^{\alpha_{\ell}}
= A = \cup_{j=1}^{\infty} A(j), A(j) = \cap_{\ell=1}^{k_j} A_{\ell}^{\alpha_{j,\ell}}(j) \in \mathcal F$,
$A(j)$ disjoint. We now focus on the countable family
$$
\tilde {\mathfrak A} = \{A_i^{\alpha_{j,i}}(j), j \in \mathbb N, 1 \leq i \leq k_j \}
\subseteq \mathfrak A.
$$
Let's fix an order of the elements of $
\tilde {\mathfrak A} $ and relabel them $B_1, B_2, \dots$. Then we consider the 
map $T:\Omega \rightarrow \{0,1\}^{\mathbb N}$, such that 
$T(\omega) =( \delta_{B_1}(\omega),  \delta_{B_2}(\omega), \dots)$.
$T$ is measurable with respect to the Borel $\sigma$-algebra
in $ \{0,1\}^{\mathbb N}$, and the $\sigma$-algebra $\sigma(\tilde {\mathfrak A}) $.
In fact, for each cylinder $C=C_{i_1, \dots, i_k}^{\beta_1, \dots, \beta_k}
=\{\rho \in \{0,1\}^{\mathbb N} : \rho_{i_m}=\beta_{i_m} \}$ we have
$T^{-1}(C) = \cap_{m=1}^k B_{i_m}^{\beta_m}$. It follows that $\sigma=
T(P)$   is a finitely additive
probability on $\{0,1\}^{\mathbb N}$; furthermore,
 $\sigma(C_{i_1, \dots, i_k}^{\beta_1, \dots, \beta_k})
=P(\cap_{m=1}^k B_{i_m}^{\beta_m}) = \prod_{m=1}^k P(B_{i_m}^{\beta_m}) $
by independence of the $B_j$'s under $P$.
Hence, for $H_k = \{0,1\}, \mathcal A_k = \mathcal P(H_k), \gamma_k$ the countably
additive probability on $\mathcal A_k$ such that $\gamma_k(1)=P(B_k)$,
$H=H^{\infty}$, $\sigma$ is a finitely additive probability on $H$ such that 
for $D_k \subseteq H_k$
\begin{eqnarray} \label{sigma}
\sigma( \times_{\ell =1}^{\infty} D_{\ell}) &=& P(T^{-1}( \times_{\ell =1}^{\infty} D_{\ell}))
\nonumber \\
&=& P(\cap_{\ell =1}^{\infty} T^{-1} (D_{\ell})) = \prod_{k=1}^{\infty} P(T^{-1}(D_k))
=  \prod_{k=1}^{\infty}  \gamma_k(D_k)
\end{eqnarray}
again by countable independence of $P$.

These are the conditions used in \cite{D74, PS76}, see also \cite{K82}, to show that 
there exists a unique finitely additive probability $\tilde P$, satisfying the further condition
 \eqref{conditionPurves}
below, such that \eqref{sigma} holds for $\tilde P$. As \eqref{sigma} holds for $\sigma$,
if it satisfies the condition below then $\sigma = \tilde P= \otimes_{k=1}^{\infty} \gamma_k$.
Hence, $\sigma$ is countably additive on the Borel $\sigma$-algebra of $H$.
It the follows that $P$ is countably additive on $\sigma(\tilde {\mathfrak A})$.
The condition to check from \cite{D74, PS76} is that for all clopen subsets $D$ of 
$\times_{\ell=k+1}^{\infty} H_k$,
\begin{eqnarray} \label{conditionPurves}
\sigma(D) = \int_{\times_{\ell=1}^{k} H_k} \sigma(D(x_1, \dots, x_k)) 
d\otimes_{\ell=1}^{k} \gamma_k((x_1, \dots, x_k))
\end{eqnarray}
where $D \subseteq H, D(x_1, \dots, x_k)
=\{z=(z_1, z_2, \dots) \in H: (x_1, \dots, x_k, z_1, z_2, \dots) \in D\}.$ In the 
present case,  \eqref{conditionPurves} holds by independence and finite additivity, as
\begin{eqnarray} \label{sigma}
\sigma( D) &=& \sigma( \cup_{(x_1, \dots, x_k)} (D(x_1, \dots, x_k) 
\cap (\rho_1=x_1, \dots, \rho_k=x_k)      ) \nonumber \\
&=&\sum_{(x_1, \dots, x_k)}  \sigma( (D(x_1, \dots, x_k) 
\cap (\rho_1=x_1, \dots, \rho_k=x_k)      )    \\
&=& \sum_{(x_1, \dots, x_k)}  \sigma( (D(x_1, \dots, x_k) )
\gamma_k( (\rho_1=x_1, \dots, \rho_k=x_k)      ) \nonumber \\
&=&  \int_{\times_{\ell=1}^{k} H_k} \sigma(D(x_1, \dots, x_k)) 
d\otimes_{\ell=1}^{k} \gamma_k((x_1, \dots, x_k))
\nonumber
\end{eqnarray}
If $ {\mathfrak A}$ is countable then the proof would be finished. For general
$\mathfrak A$ we observe that the countable additivity of $P$ on 
$\sigma(\tilde {\mathfrak A})$ for each $\tilde {\mathfrak A}$ implies that $P$ is 
countably additive on $\mathcal F$.

\bigskip

(III) Consider now a Stone representation  \cite{YH52, S16} 
in which   for a 
finitely additive probability $\mu$
 on a measurable space $(\Omega, \mathcal A)$, with
 $\mathcal A$ a $\sigma$-algebra,
 there are a compact measurable space $(\hat \Omega, \hat{\mathcal A})$,
 and a measurable map $\psi: \Omega \rightarrow \hat \Omega$, with 
 $\psi(\Omega) $ dense in $ \hat \Omega$,
 such that $\psi(E) $ has a unique extension $\hat E \in \hat{\mathcal A}$,
 and there is a unique countably additive probability $\hat \mu$
 on $(\hat \Omega, \hat{\mathcal A})$ determined by $\hat \mu (\hat E) = \psi(\mu) (\psi (E))
 $ for each $E \in \mathcal A$. Notice that the extension is monotone as
 $\hat E$ can be defined as the closure, in a suitable topology, of $\psi(E)$:
$E_1 \subseteq E_2$ implies that $\hat{E_1} \subseteq \hat{E_2}$
as 
(see \cite{S16} \cite{YH52}). 
 The probability
 $\hat{\mu} (\hat \Omega \setminus \psi(\Omega))$ of the corona $\hat \Omega \setminus \psi(\Omega)$ 
is the deficiency of $\mu$  \cite{S16}.

 %%%%%%%%
\bigskip

(IV) As $P$ is   countably additive on a semialgebra $\mathcal F$ generating 
$\sigma(\mathfrak A)$ from
Part (II) above,
then it has a unique countably additive extension $P^{ca}$ to $\sigma(\mathfrak A)$
(by standard extension theorem \cite{K02}). 

From Part (III) we have 
$\hat P$  defined, and countably additive, on $\hat{\sigma(\mathfrak A)}$.
Let $\mathcal L=\{E \subseteq \Omega: P^{ca}(E) = \hat P(\hat E)\}$. Clearly,
$\mathcal F \subseteq \mathcal L$, as for each $E \in \sigma(\mathfrak A)$
$P^{ca}(E) =P(E)= \hat P(\hat E)$, and 
$\mathcal F$ is a $\pi$-system. Moreover, $A, B \in  \mathcal L, A \subseteq B$
implies 
\begin{eqnarray}
P^{ca}(B\setminus A) &=& P^{ca}(B) - P^{ca}(A) \nonumber \\
&=&\hat P(\hat B) - \hat P(\hat A)=\hat P(\hat B \setminus \hat A)
\nonumber
\end{eqnarray}
where the last equality holds as $\hat A \subseteq \hat B$ by the monotonicity of the 
$\hat{}$ extension; also for an increasing sequence
$A_i \in  \mathcal L$ 
\begin{eqnarray}
P^{ca}(\cup_{i=1}^{\infty} A_i) &=&\lim_i P^{ca}( A_i) \nonumber \\
&=&\lim_i \hat P( \hat A_i) =\hat P(\cup_{i=1}^{\infty} \hat A_i) 
\nonumber
\end{eqnarray}
where again the last equality holds  by the monotonicity of the 
$\hat{}$ extension.
Hence, $\mathcal L$ is a $\lambda$-system, and  the $\pi$-$\lambda$-theorem
implies that $\sigma(\mathfrak A) \subseteq \mathcal L$. It follows that for all 
$E \in \sigma(\mathfrak A)$, $P^{ca}(E)= \hat P(\hat E) = P(E)$, i.e. $P^{ca}=P$,
and $P$ is countably additive on $\sigma(\mathfrak A) $.
\end{proof}
This theorem underlines once again the fact that results about independent
sequences which are valid in 
a countably additive setting
can be proven in the finitely additive setting as well (see, e.g. \cite{K82}).

\subsection{Arbitrage or Dutch Books}
If a contradiction is derived, by deductive rules, in a
probability pre-environment, then this is inconsistent.
This derivation can be eased on some occasions by the
  method  of Arbitrages, or Dutch Books.
 Informally, a Dutch Book is  a rigging strategy 
in which an individual is lead to believe that a certain game is worth playing, while (s)he is losing
some strictly positive amount every time;
equivalently, it can be defined as a betting scheme to extract a sure profit
from an incoherent agent forced to accept any bet on his betting quotients
\cite{DB}.
 More
formally,
\begin{defn}
 Given a probability pre-environment, a {\bf weak Dutch Book} 
 against the believer of the pre-environment is a 
 an additional random variable $V$, with expectation operator $E$, added to 
 the probability pre-environment, with  the additional requirements that
 \begin{enumerate}
\item if $X=\mathbb I_A$, the indicator function of an event in $\mathbb A$,
then 
$E(\mathbb I_A) = P(A)$;
\item $E$ is linear on the indicator functions;
\item if $X \geq Y$ are random variables on which $E$ is defined,
then $E(X) \geq E(Y)$;
\item $V \leq 0$;
\item $E(V)>0$.
\end{enumerate}
 A {\bf (strict) Dutch Book} is as above,
 but with 4. and 5. replaced by
 $4'$. $V \leq -1$
 and $5'$. $E(V)\geq 0$, respectively.
 In case a Dutch Book exists we call a believer of the inadmissible requirements  an {\bf incorrect evaluator} of probabilities
 \end{defn}

\begin{ex}
If we require that an event $A$ has $P(A) +P(A^c)=2$ and $A$ and $A^c$ are
jointly perceivable,
then let $V=\mathbb I_A + \mathbb I_{A^c}-2$. For any basic probability space
$(\overline \Omega, 
\overline{\mathbb A}, \overline P)$
and any $\omega \in \overline \Omega$, $V(\omega)=-1$, but
based on the  requirements of the pre-environment $E(V)=P(A) +P(A^c)-2=0$. 
So $V$ is the a strict Dutch Book.

\end{ex}
In a limited form, use of Dutch Books to define probability has been proposed
by De Finetti \cite{DF1, DF2}.

If $V$ is a strict Dutch Book then $V-1$ is a weak  Dutch Book. Moreover,
if there is a weak Dutch Book then no basic probability space 
satisfying the requirements of a pre-environment can exist,
as for any random variable $V$ on  a basic probability space
with $V\leq 0$ it holds that, 
whatever the definition of expectation, $E(V) \leq0$ by monotonicity of expected values.

In some cases, such as for finitely many requirements on finitely many events,
 also the opposite holds, and absence of a Dutch Book
guarantees the existence of the environment, see Section \ref{PSDB} below.
In general, the situation is more complex: in Section \ref{MP}
below we see that for countably many requirements
 absence of Dutch Books can be compatible with distributions on
 hyperreals, while no standard distribution exists.

\bigskip

\section{Elementary Probability} \label{Elementary probability}
In this section we consider the theory of probabilities for finitely many events
from the point of view of starting from a collection of assumptions (i.e.
requirements for a probability environment) and looking for a model satisfying them
(i.e. checking semantic consistency).
After observing that most problems can be expressed in terms of real variables,
we define Elementary Probability as the collection of problems involving
finitely may algebraic relations, and show its decidability.

\subsection{Probabilities involving a finite number of events}
A general framework for dealing with finitely many events consists of taking
the following requirements for a probability pre-environment 
 $(\Omega, \mathbb A, P)$ (i.e. fixing some  symbols and imposing requirements on them):  
\begin{enumerate} 
\item
no requirements on $\Omega$; 
\item  
$\mathbb A$  contains at least $n+2$ events 
$\Omega,
A_1, \dots, A_n, \emptyset$;
\item all finite collections of events are jointly perceivable under $P$, 
i.e. $P$ is fully additive;
\item further requirements on $P$ are determined by a  collection of expressions of the form
\begin{eqnarray} \label{RawEquations}
g_r=g_r(P(B_1(A_1, \dots, A_n)), \dots, P(B_{k(r)}(A_1, \dots, A_n))) \triangleleft 0,
r \in  R,
\end{eqnarray}
where $R$ is a set of indices of any possible cardinality,
 the $g_r$'s are real valued functions,
the $B_{j}(A_1, \dots, A_n)$'s, $j=1, \dots,k(r)$, are  
boolean combinations
 of some of the 
$A_1, \dots, A_n$'s, 
$k(r)$ is an integer,
and $ \triangleleft$ indicates one of $=, \neq,  \geq$
(notice that all other inequalities, including $>$, can be obtained combining
relations with the above values of $ \triangleleft$).
 \end{enumerate}

\begin{lem} \label{ChangeOfVariables}
The above family of requirements is semantically consistent, i.e.
determines a probability environment, if and only if the following happens.

 For every $j=1, \dots, k$ 
  let $B_j$ be expressed in
 disjunctive normal form $B_j=\cup_{\alpha \in \Sigma_j} A^{\alpha}$ \cite{HM}
 for the appropriate $\Sigma_j \subseteq \Sigma= \{-1,1\}^n$,
 $A^{-1}=A^c$, $A^1=A$ and $A^{\alpha}= \cap_{i=1}^n A_i^{\alpha_i}$.
Consider then the change of variables  $x_j= \sum_{\alpha \in \Sigma_j} y_{\alpha}$, 
 using the $2^n$ variables ${\bf y}=\{y_{\alpha}\}_{\alpha \in \Sigma}$, one for each of the 
 $A^{\alpha}$. 
 Then the family of requirements is admissibile if and only if
 the system of  equations and inequalities
 \begin{eqnarray} \label{system}
 \begin{cases}
 g_r(x_1({\bf y}), \dots, x_k({\bf y}))=g_r(\sum_{\alpha \in \Sigma_1} y_{\alpha}, \dots, \sum_{\alpha \in \Sigma_k} y_{\alpha})
\triangleleft 0  \\
\sum_{\alpha \in \{-1,1\}^n} y_{\alpha}=1\\
y_{\alpha}
 \geq 0
 \end{cases}
 \end{eqnarray}
obtained by
 the change of variables $x_j = x_j({\bf y})$, together with
 the additional  conditions of normalization and nonnegativity,
 admits  a (real) solution $y=(y_{\alpha})_{\alpha \in \Sigma}$.
 \end{lem}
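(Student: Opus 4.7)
The plan is to prove the two directions separately, both of which reduce to the elementary observation that for any $n$ events $A_1,\dots,A_n$, the $2^n$ atoms $A^{\alpha}=\cap_{i=1}^n A_i^{\alpha_i}$ form a disjoint partition of $\Omega$, and every boolean combination of the $A_i$'s is a disjoint union of such atoms via the disjunctive normal form.

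For the forward direction ($\Rightarrow$), I would assume admissibility and pick any realization $(\overline\Omega,\overline{\mathcal A},\overline{\mathbb A},\overline P)$ satisfying the foundational requirements together with the specific ones. Set $y^*_{\alpha}=\overline P(\overline{A^{\alpha}})$ for each $\alpha\in\{0,1\}^n$. By Kolmogorov's axioms, each $y^*_{\alpha}\geq 0$; and since $\overline\Omega$ equals the disjoint union $\bigsqcup_{\alpha}\overline{A^{\alpha}}$, finite additivity together with $\overline P(\overline\Omega)=1$ yields $\sum_{\alpha} y^*_{\alpha}=1$. Since $\overline{B_j}=\bigsqcup_{\alpha\in\Sigma_j}\overline{A^{\alpha}}$ by the disjunctive normal form, another application of finite additivity gives $\overline P(\overline{B_j})=\sum_{\alpha\in\Sigma_j}y^*_{\alpha}=x_j(y^*)$. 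Substituting into the third specific requirement on $P$, we obtain $g_r(x_1(y^*),\dots,x_k(y^*))\triangleleft 0$ for every $r\in R$, so $y^*$ solves the announced system.

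For the backward direction ($\Leftarrow$), given a nonnegative solution $y^*$ of the system with $\sum_{\alpha}y^*_{\alpha}=1$, I would construct an explicit concrete probability space and verify that it is a realization of the environment. Let $\overline\Omega=\{0,1\}^n$, let $\overline{\mathcal A}=2^{\overline\Omega}$, define $\overline P(\{\alpha\})=y^*_{\alpha}$ and extend by finite additivity to all of $\overline{\mathcal A}$. Then set $\overline{A_i}=\{\alpha\in\overline\Omega:\alpha_i=1\}$ and $\overline{\mathbb A}=\{\overline{A_1},\dots,\overline{A_n}\}$. The foundational requirements of Example 2.5 are immediate since $\overline\Omega$ is finite, $\overline{\mathcal A}$ is the power set, $\overline P$ is nonnegative, $\overline P(\overline\Omega)=\sum_{\alpha}y^*_{\alpha}=1$, and countable additivity reduces to finite additivity. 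Next, one checks that the constructed atoms are exactly the singletons: $\overline{A^{\alpha}}=\{\alpha\}$. Consequently $\overline{B_j}=\bigsqcup_{\alpha\in\Sigma_j}\{\alpha\}$ and $\overline P(\overline{B_j})=\sum_{\alpha\in\Sigma_j}y^*_{\alpha}=x_j(y^*)$, so the hypothesis $g_r(x_1(y^*),\dots,x_k(y^*))\triangleleft 0$ translates verbatim into the requirement on $P$. Hence this quadruple is a realization, witnessing admissibility.

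There is really no deep obstacle here: the substantive content has already been packaged into the definition of probability environment and into the disjunctive normal form decomposition. The only point requiring a moment's care is the bookkeeping that the $y^*_{\alpha}$ are consistently identified with probabilities of atoms in both directions, so that the $x_j(y)=\sum_{\alpha\in\Sigma_j}y_{\alpha}$ really recover $P(B_j)$ in every realization and not just in the canonical one constructed in the $\Leftarrow$ direction. Once that is in place, the normalization $\sum_{\alpha}y_{\alpha}=1$ plays the role of the Kolmogorov axiom $P(\Omega)=1$, and the nonnegativity $y_{\alpha}\geq 0$ encodes $P\geq 0$, so the equivalence is essentially a translation between the language of probability spaces on $n$ events and the language of real solutions to a system over $2^n$ unknowns.
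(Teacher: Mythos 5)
Your proof is correct and follows essentially the same route as the paper's: the forward direction reads off $y_{\alpha}=\overline P(\overline{A^{\alpha}})$ from any witnessing realization and uses additivity over the disjoint atoms, while the backward direction builds the canonical model on $\overline\Omega=\{0,1\}^n$ with $\overline P(\{\alpha\})=y_{\alpha}$ and $\overline{A_i}=\{\alpha:\alpha_i=1\}$. No gaps; the bookkeeping point you flag at the end is exactly the one the paper relies on.
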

 
 \begin{proof} Clearly, if there is a concrete probability space $
 (\overline \Omega, \overline {\mathcal A}, \overline P)$ with events
 $\overline A_i \in \overline {\mathcal A}$ in one to one correspondence with 
 the $A_i$'s, satisfying all the requirements, the values $y_{\alpha}
 = P(A^{\alpha})$ form a set of solutions to the the system 
 $g_r(x_1({\bf y}), \dots, x_k({\bf y}))
\triangleleft 0 $, as all events are jointly perceivable (i.e. 
$P$ is fully additive).

Viceversa, if a solution ${\bf y}=\{y_{\alpha}\}_{\alpha \in \Sigma}$
 exists, then take $\overline \Omega=
\{-1,1\}^n$, for each $\overline \omega \in \overline \Omega$ let $\overline P(\overline \omega)=
y_{\overline \omega}$,  $\overline{A_i}=\{\overline \omega: \overline \omega_i = 1\}$,
 $\overline{ \mathcal A} $ equal to the $\sigma$-algebra generated by the
collection of the $\overline{A_i}$'s, and, finally, $P$ additive (which implies that
 all events jointly perceivable.
It is easy to verify that $P(\emptyset)=0, P(\Omega)=1$, $P$ is monotone, and
satisfies all the requirements (i.e.
there is a basic probability space realizing the environment).
The requirements are then admissible are requested.

 \end{proof}
 
 Notice that a solution of \eqref{RawEquations} expressed in terms of the
 ${\bf x}$ variables
 does not imply consistency of the requirements, as these
 relations are still missing the requirements about additivity (i.e.  joint perceivability)
 and non negativity of probabilities; only absence of 
 a solution could be used to ascertain inconsistency.
 
Lemma \ref{ChangeOfVariables} suggests a classification of
probability environments for finitely many events in terms 
 of  the number of equations and inequalities and the type of functions
appearing in them. 
Some problems, such as maximal entropy, involve  uncountably many or non polynomial $g_j$'s;
in most situations, however,  
the requirements involve
 only finitely many polynomial equations and inequalities.
 In addition, in all problems involving macroscopic events
 $P$ is naturally taken as additive (i.e. all events are jointly perceivable).
 It is
 natural to call
 this class of problems {\bf (Classical) Elementary Probability}.

\subsection{Algebraization 
and decidability of Elementary Probability} \label{AlgebAndDec}

 In Elementary Probabilty, Requirement $4.$ above
 becomes
 \begin{enumerate} 
\item[$4'$.]  the requirements on $P$ are determined by a 
  finite collection of expressions of the form
\begin{eqnarray}\label{ReqElemProb}
\sum_{0 \leq \rho_1,\dots, \rho_k \leq s }
a_{\rho_1, \dots, \rho_k} (r) \prod_{j=1}^k (P(B_{j}(A_1, \dots, A_n) ) )^{\rho_j}
\triangleleft 0, \quad r \in R
\end{eqnarray}
where the $B_{j}(A_1, \dots, A_n)$'s are    boolean combinations of some of the 
$A_1, \dots, A_n$'s, $\triangleleft \in \{\geq, =, \neq \} $,
 $a_{\rho_1, \dots, \rho_k} (r) \in \mathbb R$, 
  $R \subset \mathbb N$   is a
 finite set of integers, the $\rho_j$'s, $s$ and $k$ are  integers. 
 \end{enumerate}

\begin{cor} \label{equiv2}
The consistency problem for probability environments in
Elementary Probability is decidable.
\end{cor}
\begin{proof}
If there are only a finite number of equations
  and inequalities involving polynomial $g_r$'s, then
 Lemma  \ref{ChangeOfVariables}
  implies that
the admissibility problem  is equivalent to 
the nonemptyness of the semialgebraic set defined by the polynomial relations
$g_r=g_r(({\bf x}({\bf y}))) \triangleleft 0$,
$r=1, \dots, m$, together with 
$ \sum_{\alpha \in \{-1,1\}^n} y_{\alpha}=1$ and $y_{\alpha}
 \geq 0$,
in the variables $y_{\alpha}$'s.

Using  Tarski-Seidenberg elimination  and Sturm's theorem
\cite{BCR},
the existence of a solution is decidable in a finite number of steps.

\end{proof}

Notice that  Tarski-Seidenberg  and
Sturm's theorems are purely existential results,
establishing existence
or absence
of solutions of polynomial equations  and inequalities 
 even in cases in which
the solutions cannot be explicitly found. 

We now have a procedure to check consistency  in Elementary Probability:
state the assumptions of a problem or a potential application, 
write them in algebraic form, check consistency by Tarski-Seidenberg elimination
or an alternative algorithm \cite{BPR06}, proceed with derivation of 
(now safely consistent) consequences as usual.
As a very simple example, in Appendix \ref{analysis} the contradictory problem of Appendix \ref{Contradictory} is
formally analyzed by means of algebraization. Another example is
in Appendix \ref{Example}.

\bigskip

We can, however, make a further step
as probability environments allow model theoretical  proofs
of necessary consequences of the requirements
(i.e. derivation of a consequence if all models satisfy it).
Indicating the negation of a relation $g(\mathbf x) \triangleleft 0$, with
$\mathbf x \in \mathbb R^d$, by 
 $g(\mathbf x) \bcancel{ \triangleleft } \, 0$,
 we have:
\begin{cor} \label{equiv3}
If 
\begin{eqnarray} \label{newrelaz}
g_{|R|+1}(P(B_1(A_1, \dots, A_n)), \dots, P(B_k(A_1, \dots, A_n))) \triangleleft 0
\end{eqnarray}
is an elementary probability relation, then it is a necessary consequence of
the probability environment described by 
the requirements $1.,2., 3.,4.'$ if and only if the system   \eqref{system}
augmented by the relation 
 $g_{|R|+1}(x_1(\mathbf y), \dots, x_k(\mathbf y)  )\bcancel{ \triangleleft } \,  0$
 admits no solutions. Such consequentiality  is then decidable.
\end{cor}
\begin{proof}  
In a probability environment the system \eqref{system} has at least one solution.  If one such
solution is also a solution of the augmented system, then there exists a basic
probability space in which the requirements and the negation of  \eqref{newrelaz} hold, 
hence the statement cannot be a 
necessary consequence. Viceversa, if no solution of \eqref{system} solves
the augmented system, then \eqref{newrelaz} holds in all the basic probability spaces 
which are probabilistic models of the requirements, hence it is a necessary consequence.

As the existence of solutions is decidable, so is the above deduction rule.
\end{proof}  

This allows to change the last step in the solution of problems in Elementary Probability:
express the negation of the potential consequence in algebraic form, 
use again Tarski Seidenberg elimination or another algorithm to verify that 
there is no longer a solution. If it is so, then the consequence is proven.
See the last part of Appendix \ref{Example} for an example.

\bigskip

Albeit NP complete, the method in
Corollary \ref{equiv3} solves thus
 "all problems" in Elementary Probability, at least in principle.   The same claim
 has been made by Boole \cite{B54},  without being able to complete his program.

\bigskip

\subsection{Relation with semialgebraic geometry}
Semialgebraic sets of any degree emerge in discussing satisfiability
in Elementary Probability, for instance with mutual independence of 
many events.
On the other hand, it is easy to see that 
  each semialgebraic set 
 included in some nonnegative $n$-dimensional simplex
 of the form $\Sigma_k=
 \{(x_1, \dots, x_k): x_i \geq 0, \sum_{i=1}^k x_i = 1\}$
 can be interpreted
 as a description of admissibility of requirements for some probability
 environment with jointly perceivable events (i.e. fully additive probability). 
 It is possible to use, for instance, disjoint events.
  
 \begin{thm}\label{DB1}
Each semialgebraic set  included in some nonnegative simplex
 of the form $\Sigma_k$ can be expressed as the set of conditions
for satisfiability of a probability environment in classical
Elementary Probability.

\end{thm}
\begin{proof}
Let $ g_{r}(x_1, \dots, x_k) \triangleleft 0, r=1, \dots, m+k+1$, be a system of 
polynomial relations describing a semialgebraic set
included in $\Sigma_k$. We can always assume that the last relations are
$x_j \geq 0$ for $j=1, \dots, k$, and $\sum_{j=1}^k x_j = 1$; in addition,
we have
 \begin{eqnarray}\label{g's}
  g_{r}(x_1, \dots, x_k) = \sum_{\rho_1, \dots, \rho_k: \, 0 \leq \rho_i \leq s\text{ for } i=1, \dots, k}
a_{\rho_1, \dots, \rho_k} (r) \prod_{j=1}^k x_j^{\rho_j}
\end{eqnarray}
for $r=1, \dots, m$, where $s$ is the overall maximal degree of any variable in
any of the polynomials $g_{r}, r=1, \dots, m$, and $a_{\rho_1, \dots, \rho_k} (r)$ are,
possibly zero, coefficients. 

Next, for the given $k$, consider the  requirements $1., 2., 3.$ and $4.'$ for a probability environment
in which $n:= k $, $B_j=B_j(A_1, \dots, A_k):=A_j\cap \cap_{i \neq j}A_i^c$, and the
polynomial relations in $4.'$ are $g_{r}(P(B_1), \dots, P(B_k)) \triangleleft 0$.

As in  Lemma \ref{ChangeOfVariables}, consider the variables $x_j:=P(B_j), j=1, \dots, k$
and the variables $y_{\alpha}$. For $\alpha(j)= (2\delta_{1=j}-1, \dots, 2\delta_{k=j}-1)$
we have $x_j=y_{\alpha(j)}$. The complete
system of polynomial relations becomes  $g_{r}(y_{\alpha_1}, \dots, y_{\alpha_k}) \triangleleft 0, 
r=1, \dots, m$, $\sum_{j=1}^k y_{\alpha_j}=1$,
$\sum_{\alpha} y_{\alpha}=1$, and $y_{\alpha}\geq 0$ for all $\alpha$. Combining the last three sets of 
relations, one gets that necessarily $y_{\alpha}=0$ for all $\alpha \neq \alpha(j)$ for all $j$. 
Hence, only the relations $g_{r}(y_{\alpha(1)}, \dots, y_{\alpha(k)}) \triangleleft 0, 
r=1, \dots, m$, $\sum_{j=1}^k y_{\alpha(j)}=1$ and $y_{\alpha(j)}\geq 0$ for
$j=1, \dots, k$ are left,
which  form a system coinciding with the original one.
 
 \end{proof}

 The requirements formulated to reproduce a general semialgebraic set
have no real probabilistic content, but depending on the specific case, 
one can sometimes obtain more meaningful  problems.

\section{ Elementary Probabilities via Dutch Books }
\label{PSDB}

In this section we prove that if a finite number of polynomial requirements 
are stated upon probabilities of boolean
combinations of finitely many events, then the requirements determine 
a probability environment
if and only if no Dutch Book can be realized against the believer
of such requirements. Besides its intrinsic interest, 
a reason for developing such equivalence is that it may be 
computationally advantageous in certain cases
with respect to Tarski Seidenberg elimination or related algorithms
\cite{Pa, BPT, L10}. For the definitions see Section \ref{Probability}.

\subsection{Dutch Books in Elementary Probability }
 
\begin{thm}\label{DB1}
The requirements of an elementary probability pre-environment 
with $n$ events are not 
 consistent if 
and only if, assuming that it is possible to realize
a, finite but sufficiently large, number of i.i.d.,
joinly perceivable, copies of 
the collection of events,
it is possible to realize a weak Dutch Book against any incorrect evaluator
believing such requirements.

\end{thm}
Some care must be used in interpreting the content of this theorem. 
When talking about an (incorrect) evaluator of elementary probability
we intend that (s)he has determined a phenomenon in which (s)he
can identify the various events which enter into the requirements.
One of the assumptions in the theorem is that it is possible to 
find or produce a, finite but sufficiently large, number of
phenomena in each of which the evaluator is lead to identify 
"copies" of the original events, in such a way that the original and all these
copies are jointly perceivable and collectively independent.
\begin{proof}
Consider requirements of the type $1., 2., 3.$ and $4'.$ for a problem in 
Classical Elementary
Probability, involving events $A_{i_1}, i_1=1, \dots, n$. Feasibility of
the requirements is equivalent, by Corollary \ref{equiv2},
to nonemptiness of the semialgebraic set defined by 
the polynomial relations $g_r=g_r(({\bf x}({\bf y}))) \triangleleft 0$,
$r=1, \dots, m$, together with 
$ \sum_{\alpha \in \{-1,1\}^n} y_{\alpha}=1$ and $y_{\alpha}
 \geq 0$,
in the variables $y_{\alpha}$'s.

By distinguishing the three possible values of $\triangleleft$, 
we can assume that the polynomial relations can be expressed as follows:
\begin{eqnarray} \label{system1}
\begin{cases} 
f_r({\bf y}) =0, \quad r=1, \dots, m_1 \\
g_r({\bf y}) \geq 0, \quad r=m_1+1, \dots, m_1+m_2\\
h_r ({\bf y})\neq 0, \quad r=m_1+m_2+1, \dots, m_1+m_2+m_3=m'.
\end{cases}
\end{eqnarray}
By the positivstellensatz (see \cite{Kr, S, BCR}), the system has no solution if 
and only if the following happens. There exists a polynomial $F$
in the ideal generated by the $f_r$'s in
$  \mathbb R[{\bf y}]$, a polynomial $G$
in the cone generated by the $g_r$'s in
$  \mathbb R[{\bf y}]$ and a polynomial $H$
in the multiplicative monoid generated by the $h_r$'s in
$  \mathbb R[{\bf y}]$
such that 
\begin {eqnarray}\label{polynomialdutchbook1}
F+G+H=0.
\end{eqnarray}
More explicitely,
there are polynomials $t_r \in \mathbb R[{\bf y}],  r =1, \dots, m_1 $;
$s_J \in \mathbb R[{\bf y}], J \subseteq \{m_1+1, \dots, m_1+m_2\}$
which are sums of squares;
and even  integers 
$
k_r, r=m_1+m_2+1, \dots, m_1+m_2+m_3$, such that 
\begin{eqnarray}\label{polynomialdutchbook2}
v({\bf y})=
\sum_{r=1}^{m_1} t_r f_r +
\sum_{J \subseteq \{m_1+1, \dots, m_1+m_2\}} s_J \prod_{r \in J} g_r+
\prod_{r=m_1+m_2+1}^{m_1+m_2+m_3} (h_r )^{k_r} =0
\end{eqnarray}  (\cite{BCR}).

We need to investigate the polynomial (\ref{polynomialdutchbook2})
as a polynomial in the $y_{\alpha}$'s before
taking into account that all its coefficients are zero. As such,
let $\nu_{\alpha}$ be maximal power of the variable 
$y_{\alpha}$, and consider $\nu= \sum_{\alpha  \in
\{-1,1\}^{n} }
\nu_{\alpha}$; next, list the $\alpha$'s in some fixed order
$\alpha_1, \dots, \alpha_{\bar n}$; for each $\gamma \in \{1, \dots, 
\bar n \}$ 
let $\Sigma^{(\alpha_{\gamma})}$ be the set of all permutations
$\sigma^{(\alpha_{\gamma})}=(\sigma^{(\alpha_{\gamma})}_i), 
i=1, \dots, \nu_{\alpha_{\gamma}}$
of integers 
\begin{eqnarray}\label{integers}
\{\sum_{{\gamma}'=1}^{{\gamma}-1} \nu_{\alpha_{{\gamma}'}}+1,
\dots, \sum_{{\gamma}'=1}^{{\gamma}} \nu_{\alpha_{{\gamma}'}} \}. 
\end{eqnarray} 

We take $\nu$ independent, jointly perceivable copies of the events 
identified by the incorrect evaluator. We then form  a random variable,
basically by replacing each occurrence of the variables $y_{\alpha}$'s  in (\ref{polynomialdutchbook2})
by the indicator function $\mathbb I_{\alpha,(j)}$
 that the $j$-th independent copy, 
 with $j$ to be determined, of the event $A^{\alpha} $
takes place, and then summing 
the fully replaced polynomial over all permutations
of the indices of the copies. We need to specify how to choose the
copy to be used for each replacement.
We do this in steps for each selection 
$\{\sigma^{(\alpha ) }\}_{\alpha  \in\{-1,1\}^{n} }$ of a permutation for each
$\alpha  $:
\begin{enumerate}
\item consider each of the polynomials $ t_r f_r ,
s_J \prod_{r \in J} g_r$ and $\prod_{r=m_1+m_2+1}^{m_1+m_2+m_3} (h_r )^{k_r}$
separately. 
\item In each such polynomial $u$  consider one of its factors at
a time using the factorization in which they are already expressed
(for instance $t_r$ and $f_r$ for those in the ideal);
\item 
expand out completely each such factor into a sum of monomials;
consider each monomial separately and  if in  such a monomial
the variable 
 $y_{\alpha}$ appears at some power $\overline m^{(1)}_{\alpha}$,
 then replace it by the product 
 $\prod_{j=1}^{\overline m^{(1)}_{\alpha} } \mathbb I_{\alpha,(\sigma^{(\alpha  )}_j)}$;
 repeat for all variables in $\bf y$. 
 Decorate the symbol of the factor by a tilde 
  to indicate the  random
 variable thus obtained, so that $t_r$ is changed into $\tilde t_r=\tilde t_r^{\sigma^{(\alpha  )}}$, 
 for instance; 
 notice that, although we will drop the dependency, the random variable depends on the fixed permutation $\sigma^{(\alpha  )}$.
 \item Consider the second factor of $u$; repeat the previous step,
 with this change: if the variable 
 $y_{\alpha}$ appears at some power $\overline m^{(2)}_{\alpha}$,
 then replace it by the product 
 $\prod_{j=\overline m^{(1)}_{\alpha}+1}^{\overline m^{(2)}_{\alpha} } \mathbb I_{\alpha,(\sigma^{(\alpha  )}_j)}$.
 \item Repeat, always using 
 $\mathbb I_{\alpha,(\sigma^{(\alpha  )}_j)}$ referred to 
 new $j$'s and hence additional copies,
 till all factors of $u$ have been changed; notice that the
 total number of copies of $A^{\alpha} $
 used in the procedure
 is not greater than $\nu_{\alpha}$.
 \item Consider the next polynomial from the list in point $1.$, and repeat
 steps $2.$-$5.$ till all polynomials in $1.$ have been changed.
 \end{enumerate}
 The above procedure produces a random variable 
\begin{eqnarray*}
 \tilde u(\sigma^{(\alpha  )})= \sum_{r=1}^{m_1} \tilde t_r \tilde f_r +
\sum_{J \subseteq \{m_1+1, \dots, m_1+m_2\}} \tilde s_J \prod_{r \in J} \tilde g_r+
\prod_{r=m_1+m_2+1}^{m_1+m_2+m_3} (\tilde h_r )^{k_r}.
\end{eqnarray*}

 Let then 
$V = \sum_{\text{ all permutations}\{\sigma^{(\alpha  )}\}_{\alpha  \in
\{-1,1\}^{n} }}
\tilde u (\sigma^{(\alpha  )})$.

We compute the expected value of $\tilde v$ according to the incorrect evaluator.
Consider one of the polynomials in point $1.$ after substituting
the variables with the indicator functions as above; 
any  two of its factors contain indicator functions which refer to different
copies of the space, by \eqref{integers}.
Therefore, the incorrect evaluator would consider all factors as
independent, and factorize the expected value of the product.
Similarly, in each monomial inside each factor, the variables
were also substituted with indicator functions which refer to different
copies of the space by $3.$ and the fact that the $\sigma$'s are permutations;
 hence, indicator functions in each monomial  are considered independent by the incorrect evaluator.
We have thus that he/she would compute the expected value of the product of
the indicator functions which has replaced the variables of a monomial 
$\prod_{\alpha} y_{\alpha}^{k_{\alpha}}$ as
$$
E(\prod_{\alpha} \prod_{j \in J_{\alpha} }
\mathbb I_{\alpha,\sigma^{(\alpha)}_j})
=\prod_{\alpha}  (P(A^{\alpha}))^{ k_{\alpha} }
$$
for  some  set of distinct integers $J_{\alpha}$ of cardinality
$k_{\alpha}$. In addition, all events are jointly perceivable,
so that the expectation is linear on the sum of monomials.
Hence, 
 the incorrect evaluator would compute $E(\tilde u(\sigma^{(\alpha  )}))$ as the corresponding
 polynomial in ${\bf y}$ with the $y_{\alpha}$'s replaced by
 the $P(A^{\alpha} )$'s. This is the value for
 which he/she thinks that the relations in (\ref{system1}) hold.
It follows that, if $\overline {\bf y}$ indicates the value of 
 ${\bf y}$ with the above substitutions,
 the incorrect evaluator would compute, again by linearity of the expected value
 due to joint perceivability:
\begin{eqnarray*}
E(V)&=&
\sum_{\text{ all permutations }\{\sigma^{(\alpha  )}\}_{\alpha  \in
\{0,1\}^{n} }}
E(\tilde u(\sigma^{(\alpha  )})) \\
&=&\sum_{\text{ all permutations}}   \left(
\sum_{r=1}^{m_1} E(\tilde t_r)E( \tilde f_r )+
\sum_{J \subseteq \{m_1+1, \dots, m_1+m_2\}} E(\tilde s_J )\prod_{r \in J} E(\tilde g_r)
\right. \\
&&\quad \quad \quad \left. +
 \prod_{r=m_1+m_2+1}^{m_1+m_2+m_3} (E(\tilde h_r ))^{k_r} \right) \\
&=&  \left(  \prod_{\alpha  } \nu_{(\alpha  )} ! \right) \left(
\sum_{r=1}^{m_1}  t_r(\overline {\bf y}) f_r (\overline {\bf y})+
\sum_{J \subseteq \{m_1+1, \dots, m_1+m_2\}} s_J (\overline {\bf y})
\prod_{r \in J}  g_r(\overline {\bf y})
\right. \\
&&\quad \quad \quad \left. + 
\prod_{r=m_1+m_2+1}^{m_1+m_2+m_3} ( h_r (\overline {\bf y}))^{k_r} \right) 
\\
&\geq& \left(  \prod_{\alpha  } \nu_{(\alpha  )} ! \right) 
\prod_{r=m_1+m_2+1}^{m_1+m_2+m_3} ( h_r (\overline {\bf y}))^{k_r} 
>0
\end{eqnarray*}
from the equalities and inequalities in (\ref{system1}), and the 
properties of the polynomials $s_J$ and the powers $k_r$.

\bigskip

We finally evaluate $V$ for each possible realization of events
 in the collection 
identified by the incorrect evaluator and in all the copies. 
First, expand $\tilde v$ completely, and then collect
all terms corresponding to random variables which have
replaced the same monomial
$\prod_{\alpha} y_{\alpha}^{k_{\alpha}}$.
For each such monomial, there is a certain number $m$ of terms,
with coefficients $c_1, \dots, c_m$; 
we have $\sum_{i=1}^m c_i=0$ as the corresponding
monomial in  the expansion of the l.h.s of  (\ref{polynomialdutchbook2}) has 
zero coefficient. When taken with the indicator functions replacing the
variables, the sum is not immediately zero, as the
indicator functions refer to different copies. On the other hand,
keeping track of the permutation and indicating by $\sigma^{\alpha}_j(i)$
the  permutation used in such monomial when the coefficient is $c_i$, we have 
that all the random variables related to the same monomial
add up to
\begin{eqnarray*}
&& \sum_{\text{ all permutations}} \sum_{i=1}^m 
c_i \prod_{\alpha} \prod_{j \in J_{\alpha} }
\mathbb I_{\alpha,\sigma^{(\alpha)}_j(i)}
\\
&&  \quad\quad \quad =
\sum_{i=1}^m \sum_{\text{ all permutations}} 
c_i \prod_{\alpha} \prod_{j \in J_{\alpha} }
\mathbb I_{\alpha,\sigma^{(\alpha)}_j(i)}\\
&&  \quad\quad \quad =
\sum_{i=1}^m 
c_i   (\nu - \sum_{\alpha: J_{\alpha} \neq \emptyset} \nu_{\alpha})!
\sum_{\text{  permutations}: J_{\alpha} \neq \emptyset} 
\prod_{\alpha} 
\prod_{j \in J_{\alpha} }
\mathbb I_{\alpha,\sigma^{(\alpha)}_j(i)}\\
&&  \quad\quad \quad = \left(
(\nu - \sum_{\alpha: J_{\alpha} \neq \emptyset} \nu_{\alpha})!
\sum_{\text{  permutations}: J_{\alpha} \neq \emptyset} 
\prod_{\alpha} 
\prod_{j \in J_{\alpha} }
\mathbb I_{\alpha,\sigma^{(\alpha)}_j(1)} \right) \sum_{i=1}^m 
c_i \\
&&  \quad\quad \quad =0
\end{eqnarray*}
where the penultimate equality derives from the fact that in each product all the
terms $\mathbb I_{\alpha,\sigma^{(\alpha)}_j(i)}$ refer to the same number of
different copies by construction, and hence the sum over all permutations
does not depend on $i$.

\bigskip

Therefore, if the payoff of a game is $V$,
the incorrect evaluator is  willing to pay an entry fee to participate,
but the game ends up being a draw all the time. 
This is the weak Dutch Book mentioned in the statement of the theorem.

\end{proof}

Some remarks.  The amount of the entry fee
cannot be predicted in advance but only determined when the
terms in (\ref{polynomialdutchbook2}) are computed,

It is possible to give bounds on the number of copies of the given events, based on 
bounds on the number of polynomials used in Stengle's Theorem \cite{LPR} and
the number of permutations. But such bounds are far from optimal.

The random variable $V$ obtained above, representing the payoff
in the Dutch Book, is often not
the best possible option, especially because 
a large number of permutations has been introduced. For an actual
determination of a game one can often select the copies more
carefully so as to set up a game which is more obviously 
"advantageous" for the incorrect evaluator.
In the Appendix \ref{Example2} there is a very simple example, worked out completely including
an alternative choice for the $V$.

One could observe that it is not obvious that the incorrect evaluator is 
capable of computing $E(V)$ according to his or her own assumptions,
and that exploitation of incorrect probability evaluations is a (possibly deplorable)
art in itself.

\subsection{Variations }

\begin{cor}
If the inadmissible requirements for 
 a finite number of events contain no strict inequalities
 then, assuming the possibility of producing mutually independent, jointly
 perceivable copies of the
 events, a
 strict Dutch Book can be realized against any incorrect evaluator
believing such requirements.

\end{cor}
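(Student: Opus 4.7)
The plan is to upgrade the theorem's weak-Dutch-Book construction by invoking the sharper form of the Positivstellensatz available when no strict inequality constraints are present. Without terms of the form $h_r \neq 0$, the infeasibility of the system $\{f_r({\bf y}) = 0,\ g_r({\bf y}) \geq 0\}$---after adjoining the normalization $\sum_\alpha y_\alpha - 1 = 0$ to the equalities and the nonnegativities $y_\alpha \geq 0$ to the inequalities---is equivalent, by the Positivstellensatz of Bochnak--Coste--Roy (specialized to empty strict-inequality set, in which case the square term reduces to the constant $1$), to the existence of a polynomial $F = \sum_r t_r f_r$ in the ideal generated by the $f_r$'s and a polynomial $G = \sum_J s_J \prod_{r \in J} g_r$ in the cone generated by the $g_r$'s (each $s_J$ a sum of squares) satisfying the polynomial identity
\[
F({\bf y}) + G({\bf y}) \equiv -1.
\]
The crucial feature compared with the theorem is that the right-hand side is now the nonzero constant $-1$ instead of $0$; this is exactly what will supply an actual bounded loss in every realization rather than a mere draw.

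I would then apply the indicator-substitution-and-permutation construction of the theorem's proof to $F$ and $G$ separately to produce random variables $\tilde F$ and $\tilde G$ on a sufficiently large product of independent copies. The monomial-matching argument used there to show $\tilde v \equiv 0$ when $v \equiv 0$ extends verbatim to polynomial identities with a nonzero constant: the construction carries a constant polynomial $c$ to the deterministic random variable $cK$, where $K = \prod_{(\alpha,{\bf s})} \nu_{(\alpha,{\bf s})}!$ is the total permutation count (the constant is untouched by indicator substitution and is added once per permutation). Applied to $F + G + 1 \equiv 0$, this yields the pointwise identity
\[
\tilde F + \tilde G \equiv -K \qquad \text{in every realization.}
\]

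The Dutch Book game is now $\tilde v = \tilde F + \tilde G + K/2$. In every realization $\tilde v = -K/2 < 0$, so the incorrect evaluator loses the fixed positive amount $K/2$ regardless of outcome. On the other hand, computing expectations exactly as in the theorem using the believed values $\overline{\bf y}$ (with $f_r(\overline{\bf y}) = 0$ and $g_r(\overline{\bf y}) \geq 0$), the evaluator obtains $E(\tilde F) = K\cdot F(\overline{\bf y}) = 0$ and $E(\tilde G) = K\cdot G(\overline{\bf y}) \geq 0$ (since $G$ lies in the cone and is nonnegative under these beliefs), and hence
\[
E(\tilde v) \;=\; 0 \;+\; K\,G(\overline{\bf y}) \;+\; K/2 \;\geq\; K/2 \;>\; 0.
\]
The evaluator therefore anticipates an average gain of at least $K/2$ while in fact losing $K/2$ in every single realization---precisely the Dutch Book required.

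The main obstacle is bookkeeping rather than conceptual difficulty: one must verify that incorporating the normalization and nonnegativity conditions into the ideal and cone does not obstruct application of the Positivstellensatz, and that the realization-identity argument from the theorem genuinely sends the constant polynomial $1$ in $F + G + 1$ to the random variable $K$ (the multiplicity introduced by summing over permutations).
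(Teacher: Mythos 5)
Your proposal follows essentially the same route as the paper: with no strict inequalities the Positivstellensatz certificate reduces to $F+G=-1$ (the monoid term being the constant $1$), and the theorem's indicator-and-permutation construction applied to $F$ and $G$ yields a game with believed expectation $\geq 0$ but a constant realized loss. Your two refinements --- tracking the permutation multiplicity $K=\prod\nu_{(\alpha,{\bf s})}!$ explicitly (the paper loosely writes the realized value as $-1$) and shifting by $K/2$ so the believed expectation is strictly positive rather than merely "perceived as fair" --- are both sound and, if anything, match the paper's stated definition of a Dutch Book more literally.
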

\begin{proof}
If there are no strict inequalities in the requirements,
since normalization and nonnegativity of probabilities correspond also 
to inequalities which are not strict, there are no strict inequalities in (\ref{system1}).
Hence, we can take $h=1$,
and this generates the multiplicative monoid. From \eqref{polynomialdutchbook1}
we have $F+G=-1$. Following the same construction as in the proof of 
Theorem \ref{DB1}, 
except for the terms in the multiplicative monoid, one gets
$$ V=  \sum_{\text{ all permutations}\{\sigma^{(\alpha  )}\}_{\alpha  \in
\{-1,1\}^{n} }}
\left ( \sum_{r=1}^{m_1} \tilde t_r \tilde f_r +
\sum_{J \subseteq \{m_1+1, \dots, m_1+m_2\}} \tilde s_J \prod_{r \in J} \tilde g_r \right ).
$$
The incorrect evaluator now estimates $E(V)=0$, while for each
realization $V =-1.$

With payoff  $V$, then,
the incorrect evaluator perceives  the game as fair,
while losing a constant unit amount.
This is the Dutch Book mentioned in the statement of the corollary.

\end{proof}
Notice that in this case the amount lost is fixed, and as such known in 
advance of any calculation about the polynomials determining the Dutch Book.

\bigskip

One way to certify inadmissibility of the requirements is to form a Dutch
Book using  some of the $g({\bf x}) \triangleleft 0$'s only. This is the 
case for the example in Appendix \ref{Contradictory}, in which there is a linear subsystem
which has no solutions; in such a case
a simple linear programming technique
leads to a linear combination of the equations certifying the inadmissibility
of the requirements, and a Dutch Book can be formed as in the corollary below,
with just one copy (see also Appendix  \ref{Example3}).

In fact, one can produce an inadmissibility certifying Dutch Book 
by explicitly deriving a contradiction from the requirements and use it 
to construct the Dutch Book.
  We say that a polynomial
 equation or inequality $f({\bf y}) \triangleleft 0$ {\it implies} the inequality $a \geq b$ between
 two polyomials $a({\bf y}), b({\bf y})$ if $a-b= t({\bf y}) f({\bf y})$
 where $t$ is either some polynomial in the variables ${\bf y}$
 for the case in which $ \triangleleft$ is $=$, or
 $t$ is a sum of squares polynomial for the case in which $ \triangleleft$ is $\geq$.
  The next corollary formalizes how we 
  deduce contradictions with probabilistic calculations;
  this is actually a very optimistic description, as we are generally quite limited in deducing contradictions, 
  and are hardly able to use intuition about sum of squares polynomials
  in probabilistic settings.

\begin{cor} \label{Cor4.3}
Given the system  \eqref{system1},
suppose there are polynomials $a_1, \dots, a_n$ in the variables ${\bf y}$
such that 
\begin{enumerate}
\item $a_1 , a_n \in \mathbb R$ (i.e. they do not depend on {\bf y});
\item $a_1 < a_n$;
\item for each $k=1, \dots, n-1$,  $a_k ({\bf y})\geq a_{k+1}({\bf y})$ is implied by
 one of 
the relations in \eqref{system1} with no strict inequality.
\end{enumerate}
Then, assuming that it is possible to realize
a finite but sufficiently large number of independent, jointly perceivable copies of 
the collection of events,
 one can form a Dutch Book as follows.
 Suppose that at step $k$ the relation $f_{i_k}\triangleleft 0$
implies  $a_k \geq a_{k+1}$, and let $t_k$ 
be such that $a_{k+1}({\bf y})-a_k ({\bf y}) = t_k({\bf y}) f_{i_k}({\bf y})$. Then
$v({\bf y})=\frac{1}{a_n-a_1}\sum_{k=1}^{n-1} t_k f_{i_k}=-1$;
and the random variable $V$ obtained as in the proof of
Theorem \ref{DB1} is the payoff a Dutch Book.
\end{cor}
\begin{proof}
Consider the following procedure.
 Start from $a_1$;
if $f_{i_1}  \triangleleft 0$ implies
$a_1 \geq a_2$, then  let $t_1$ be the polynomial such
that $a_1- a_{2}=t_1 f_{i_1} $;
then 
 $a_1= t_1 f_{i_1} + a_{2}$;
continuing one gets
$a_1= t_1 f_{i_1}+t_2 f_{i_2}+a_3 =\dots =\sum_{k=1}^{n-1} t_k f_{i_k} +a_n$.
Thus
$
\sum_{k=1}^{n-1} t_k f_{i_k} = a_1-a_n <0
$
and 
$v({\bf y})= -1.
$
By  replacing the  ${\bf y}$ variables with indicator functions
as in the proof of Theorem \ref{DB1} one gets the payoff random variable
of a Dutch Book, since $t_k f_{i_k} \geq0 $ in all cases, and $a_n > a_1$.
\end{proof}

An example is in Appendix \ref{Example3}.

\section{Continuous variables} \label{Continuous variables}

Probabilities in the continuous case are characterized by the requirements
that $\Omega$ is some Borel  subset of $\mathbb R$ or $\mathbb R^n$,
and  $\mathbb A = \mathcal B_{\Omega}$, the Borel $\sigma$-algebra of $\mathbb R$
or $\mathbb R^n$ restricted to $\Omega$. 

\subsection{Generalized moments problem and nonstandard analysis} \label{MP}

In the classic problem of moments one assigns a Borel 
subset $\Omega$ of $\mathbb R$
or $\mathbb R^n$ and potential moments, and then looks
for existence of random variables (which  are described by
the requirements of being measurable real valued functions)
defined on $\Omega$ and satisfying the prescribed moments 
with respect to the Lebesgue measure  \cite{ST, L10}. As the unknown is the 
distribution of the random variable, this
 is a linear problem.
 
The solution is
generally expressed in terms of Dutch Books, albeit apparently not using this
explicit terminology. In fact, the Riesz-Haviland theorem, which can be used to identify the main
conditions for existence of solutions to moment problems, states that given 
values $m_{\mathbf k},$ $ \mathbf k \in \mathbb N^n$ and a closed  set $\Omega \subseteq 
\mathbb R^n$, there is a probability $\mu$ concentrated on $\Omega$
such that $\int_\Omega {\mathbf x}^{\mathbf k} \mu d{\mathbf x}= m_{\mathbf k}$
if and only if the following happens:
for a polynomial $p({\mathbf x}) = \sum_{{\mathbf k} \in  \mathbb N^n}
a_{\mathbf k} \prod_{i=1}^n x_i^{k_i} $ in the variables ${\mathbf x}=(x_1, \dots, x_n)$
and ${\mathbf m} = \{m_{\mathbf k}\}_{{\mathbf k} \in  \mathbb N^n}$,
we indicate $p({\mathbf m})=\sum_{{\mathbf k} \in  \mathbb N^n}
a_{\mathbf k}m_{\mathbf k}$; then
$p({\mathbf m}) \geq 0$ for every polynomial $p$ such that $p({\mathbf x}) \geq 0$
for all $ {\mathbf x} \in \Omega$.
Suppose now  that an incorrect evaluator of probabilities assigns moments $m_ {\mathbf k} $ to some
 random variables $ {\mathbf X}$ taking values in $\Omega \subseteq \mathbb R^n$,
 but  there is no probability environment for these requirements. Then let $\overline p ({\mathbf x})  $ be the polynomial, nonnegative on $\Omega$, for which the above condition
fails, i.e. $\overline p({\mathbf m})<0$. Then, $V(\mathbf X)=- \overline p( {\mathbf X}) $ 
is a weak Dutch Book against the incorrect evaluator;
in fact, s(he) evaluates $E(V)=E(-\overline p( {\mathbf X}))= -\overline p({\mathbf m})>0$,
while in fact, for every possible value ${\mathbf x}$ of the random variable
$ {\mathbf X}$, $V=- \overline p( {\mathbf x}) \leq 0$. 

This type of results can be easily generalized to situations in which, instead of
giving directly the (potential) moments of a distribution, polynomial
relations between such moments are assigned, provided that the relations involve
compact sets.

\begin{thm} \label{5.1}
 Consider a countable collection of polynomials 
${\mathbf p}=p_i({\mathbf m})$, $i \in \mathbb N$, in the variables
${\mathbf m}= \{m_{\mathbf k} \}_{{\mathbf k}  \in \mathbb N^n}$, and requirements on a 
probability $\mu$ that it is concentrated on a closed 
subset $\Omega \subseteq \mathbb R^n$
and  its moments  $m_{\mathbf k} =\int_\Omega {\mathbf x}^{\mathbf k} \mu d{\mathbf x}$ 
satisfy $p_i({\mathbf m}) \triangleleft_i 0$ for all $i$. If the 
$ \triangleleft_i $'s contain no strict inequalities and there exists
constants $ M_{\mathbf k}, \mathbf k \in \mathbb N^n$,
such that if 
 $\mathbf m$ is a solution of  all the $p_i({\mathbf m}) \triangleleft_i 0$'s  
 then
 \begin{eqnarray}\label{bound}
m_{\mathbf k} \leq M_{\mathbf k} \text{ for each } {\mathbf k}  \in \mathbb N^n,
\end{eqnarray}
 then
there is a probability $\mu$ satisfying the requirements if and only if there
is no Dutch Book against the believer of the above relations.
\end{thm}
In the classic moment problem all $\triangleleft_i$'s are equalities,
hence the relations contain no strict inequalities and 
the $m_{\mathbf k}$'s are bounded; the condition is also fulfilled if, for instance, all equations
 are of the form $a_{\mathbf k} \geq (m_{\mathbf k} - b_k)^2 $, but not if there are some
 $a_{\mathbf k} <m_{\mathbf k}^2$.

\begin{proof}  
 Let $C^{(s)}$ be the set of
$m^{(s)}_{\mathbf k} $ which are solutions
of the relations $p_i({\mathbf m}) \triangleleft_i 0$, $i=1, \dots, s$, and also satisfy
\eqref{bound}; as there are no strict inequalities, the $C^{(s)}$'s form a decreasing 
sequence of compact sets 
in $\mathbb R^{\infty}$ with product topology; and, as \eqref{bound} holds for all solutions 
of all relations $p_i({\mathbf m}) \triangleleft_i 0$, each such solution 
belongs to $C^{(s)}=\cap_{i=1 }^s C^{(i)}$ for each $s$, and hence to
$\cap_{s \in \mathbb N} C^{(s)}$. The absence of any probability satisfying the 
requirements might occur for two reasons:  either
$\cap_{s \in \mathbb N} C^{(s)}= \emptyset$, or for each ${\mathbf m} \in 
\cap_{s \in \mathbb N} C^{(s)}$ there is no solution to the moment problem with 
 ${\mathbf m}=\{m_{\mathbf k}\}_{{\mathbf k} \in \mathbb N}$ as values for the moments.

If  $\cap_{s \in \mathbb N} C^{(s)}= \emptyset$ then there is an $s$ such that 
$ C^{(s)}$ is empty by completeness of $\mathbb R$. Which 
means that necessarily
there is no solution to the polynomial relations $p_i({\mathbf x}) \triangleleft_i 0$, $i=1, \dots, s$. By the Positivstellensatz, there is a polynomial $v({\mathbf x})=-1$
such that  the relations $p_i({\mathbf x}) \triangleleft_i 0$, $i=1, \dots, s$ imply
$v\geq 0$. We ca now perform a construction analogous to the one in the proof of Theorem \ref{DB1},
using enough mutually independent, jointly perceivable copies of the 
random variables on $\Omega$ which have been identified by the incorrect evaluator;
as the expectation factorizes over the product of random variables
depending on mutually independent, jointly perceivable probability spaces,
it is easily seen that we get a strict Dutch Book $V$.

If  $\cap_{s \in \mathbb N} C^{(s)} \neq  \emptyset$, then for each $\widetilde {\mathbf m} \in 
\cap_{s \in \mathbb N} C^{(s)}$  there is no solution to the moment problem with the
 $\widetilde m_{\mathbf k}$'s as values for the moments; hence
 for each for each such $\widetilde {\mathbf m}$ there is a  polynomial $p^{(\widetilde {\mathbf m})}$,
 nonnegative on $\Omega$, for which $ p^{(\widetilde {\mathbf m})}(\widetilde {\mathbf m}) <0$. 
 Each such polynomial determines an open set $ B^{(\widetilde {\mathbf m})}=\{ {\mathbf m}: 
 p^{(\widetilde {\mathbf m})}(  {\mathbf m}) <0\}$; and 
 $$
 \{ B^{(\widetilde {\mathbf m})} \}_{\widetilde {\mathbf m} \in \cap_{s \in \mathbb N} C^{(s)}}
 $$ 
 is an open cover of the compact set $\cap_{s \in \mathbb N} C^{(s)}$, from which we can 
 extract a finite subcover 
 $  \{ B^{(\widetilde {\mathbf m}^{(j)})} \}_{j=1,\dots,r}$. The function
 $v({\mathbf x})=\min_{j=1, \dots, r} p^{(\widetilde {\mathbf m}^{(j)})}(\mathbf x) $ is
nonnegative  on $\Omega$, and yet $v({\mathbf m})<0 $ in each solution 
${\mathbf m} \in \cap_{s \in \mathbb N} C^{(s)}$. Consider payoffs $V=-v({\mathbf  X})$:
then $ \leq 0$
for every realization ${\mathbf  x}$ of ${\mathbf  X}$; on the other
hand, whatever ${\mathbf  m}$ the incorrect evaluator deems the moments to be,
it will be $p^{(\widetilde {\mathbf m})}(  {\mathbf m}) <0$ for some $\widetilde {\mathbf m}$,
so
\begin{eqnarray*}
E(V(X)) &=& E(\max_{j=1, \dots, r} - p^{(\widetilde {\mathbf m}^{(j)})}(\mathbf X) \\
 &\geq &  E (-p^{(\widetilde {\mathbf m})}(  {\mathbf X}) )
 = -p^{(\widetilde {\mathbf m})}(  {\mathbf m}) >0.
 \end{eqnarray*}

Hence, $V$ is a Dutch Book.

\end{proof}

\bigskip

The situation changes if there are strict inequalities in the $p_i({\mathbf m}) \triangleleft_i 0$'s
or the set of solutions is unbounded, as
the set of possible moment values is no longer compact. In such case, absence of
a Dutch Book is compatible with absence of a probability distribution
satisfying the given constraints. In fact, there might be a Loeb
distribution on nonstandard reals \cite{T2012} which satisfies all the requirements.
\begin{ex}
Let $n=1$; $p_0 $ be $ m_1 >0$;
$p_{2 r}$ be $m_1\leq1/r$;  and $p_{2 r+1}
$ be $(m_1)^r-m_r=0$, for $r \in \mathbb N$.
Clearly, there is no standard solution, but the atomic Loeb distribution
concentrated on the hyperreal  
 $(1,1/2, \dots, 1/n, \dots)/\mathcal U  \in {}^*{\mathbb{R}} $,
 where $\mathcal U $ is a fixed ultrafilter,  satisfies
all the requirements.
\end{ex}
It is likely to be the case that in the continuous case
absence of a Dutch Book is equivalent to the existence of
a distribution on nonstandard numbers which satisfies all the requirements.

On the other hand, if one wants to find a structure whose absence guarantees
existence of a standard solution to the
moment problem in general form, i.e. with possible strict inequalities,
one would need to develop a modified
 version of Dutch Books: this 
situation, limited to the existence of a martingale measure in
absence of a modified version of arbitrage, has been solved in \cite{DS08}.

\subsection{A decidable fragment: poly-moments conditions for joint normals} \label{DFP}

Consistency of some collections of requirements is more manageable; in particular, satisfiability may
become decidable in some other  classes of problems besides Elementary Probability.
We briefly present one example below; it is again based on semialgebraic geometry.

\bigskip

 Consider the following requirements. $\Omega = \mathbb R$,
$\mathbb A=  \mathcal B_{\mathbb R}$, and $P=e^{-(\prod_{j=1}^n x_j^2)/2} \lambda$,
where $\lambda$ is the Lebesgue measure; 
moreover, 
there are $n$ random variables
$X_1, \dots, X_n$ satisfying:
\begin{eqnarray}\label{DefJoinGaus}
X_i=\sum_{j=1}^n a_{i,j} Z_j + b_j \text{ for some } a_{i,j}, b_j \in \mathbb R
\end{eqnarray}
where $Z_j$ are such that their joint distribution has density $dP/d\lambda$
(in short, the $Z_j$'s are i.i.d. $N(0,1)$ and the $X_i$'s have joint normal distributions);
 finally,
the $X_i$'s satisfy
 \begin{eqnarray}\label{ReqPolyMomCondJointGauss1}
  \sum_{0 \leq \rho_i \leq s,\text{ for } i=1, \dots, n}
\rho_{k_{1,r}, \dots, k_{n,r}} (r) E(\prod_{i=1}^n X_i^{k_{i,r}}) \triangleleft 0
\end{eqnarray}
for $r=1, \dots, R$, where  the
$\rho_{k_{1,r}, \dots, k_{n,r}} (r)$'s are given real constants, and the $k_{i,r}$'s and $s$ are given integers. 
We call these probability environments poly-moment conditions for joint normals. 
We have

\begin{thm}
For given $\rho_{r}$'s and $k_{i,r}$'s, it is decidable whether the
 requirements for joint normal distributions determine a probability environment or not.
\end{thm}
\begin{proof}
Substitute in  \eqref{ReqPolyMomCondJointGauss1}  
the expressions of $X_i$'s from   \eqref{DefJoinGaus} and expand.
The $Z_j$'s are independent, and their moments are known: $E(Z^k)=
(k-1)!!$ for $k$ even, and $0$ otherwise. Therefore, we get $R$ 
polynomial relations in the variables $a_{i,j}$ and $b_j$, for which the existence
of a real solution is decidable as described before. Once we have 
one selection of values for the $a_{i,j}$'s and the $b_j$'s,  \eqref{DefJoinGaus} gives
the required concrete random variables.

\end{proof}
Notice that, to the opposite of Elementary Probability,  it is not clear under which conditions a
 system of polynomial relations is obtained from
the feasibility test of poly-moment conditions for joint normals.

% to get appendices
\titleformat{\section}{\large\bfseries}{\appendixname~\thesection .}{0.5em}{}

\begin{appendices}

\section{A contradictory set up} \label{Contradictory}
We present here a simple exercise taken from a widely
used, application oriented, very high quality textbook. In the exercise, which
is set up so as to mimic a realistic production problem, the
authors propose a set of assumptions about probabilities and ask for
the calculation of several other probabilities; the required calculations
can be carried out without difficulties; 
it is very likely that this exercise has been solved thousands of times.
What is problematic, however,
is that on carrying out one extra 
 calculation one realizes that
the actual set of assumptions is inconsistent. 
In \cite{AT}, Exercise $2.8$ page $67$ presents the following
problem.
\begin{ex} \label{contra2}
On a given day, casting of concrete structural elements at a construction
project depends on the availability of material. The required material
may be produced at the job site or delivered from a premixed concrete 
supplier. However, it is not always certain that these sources of material
will be available. Furthermore, whenever it rains at the site,
casting cannot be performed. On a given day, define the following elements:
\begin{itemize} 
\item[] $E_1=$  there will be no rain
\item[] $E_2=$   production of concrete material at the job site is feasible 
\item[] $E_3=$   supply of premixed concrete is available
 \end{itemize}
 with the following respective probabilities:
$P(E_1)= 0,8$, $P(E_2)= 0,7$, $P(E_3)= 0,95$ and $P(E_3 | E_2^c)= 0,6$
whereas  $E_2$ and $E_3$ are statistically independent of $E_1$.
\begin{itemize} 
\item[(a)] Identify the following events in terms of  $E_1, E_2,$ and $ E_3$:
\begin{itemize}
\item[(i)] $A=$ casting of concrete elements can be performed on a given day; \\
\item[(ii)]  $B=$ casting of concrete elements cannot be performed on a given day.
\end{itemize}
\item[(b)]  determine the probability of the event $B$.
\item[(c)] If production of concrete material at the job site is not feasible, what is 
the probability that casting of concrete elements can still be performed on a 
given day?
 \end{itemize}
 \end{ex}
 We just briefly mention the intended solution. It is implicitly assumed,
 from the theory presented in the book, 
 that all events are jointly perceivable.
\begin{itemize} 
\item[(a)] $A=E_1 \cap(E_2 \cup E_3)$
$B=A^c=E_1^c \cup(E_2^c \cap E_3^c)$;
\item[(b)] by the independence of (any combination of)
 $E_2, E_3$ 
from $E_1$ we have
\begin{eqnarray*}
P(B)&=&1-P(A)=
1-P(E_1 \cap(E_2 \cup E_3))
\\
&=&1-P(E_1) P(E_2 \cup E_3).
\end{eqnarray*}
Since $E_2 \cup E_3= E_2 \cup (E_2^c \cap E_3)$ and
$P(E_2^c \cap E_3)=P( E_3|E_2^c )P(E_2^c)
=0,6  \times(1-0,7)=0,18$, we have
\begin{eqnarray}  \label{calc1}
P(B)
&=&1-(0,8 \times (0,7+0,18))=1-0,704=0.296.
\end{eqnarray}
\item[(c)]
we have
\begin{eqnarray*}
P(A|E_2^c)&=&\frac{P(A \cap E_2^c)}{P(E_2^c)}\\
&=&\frac{P(E_1 \cap(E_2 \cup E_3) \cap E_2^c)}{P(E_2^c)}
\\
&=&\frac{P(E_1 \cap  E_3 \cap E_2^c)}{P(E_2^c)}
\\
&=&\frac{P(E_1)P( E_3 \cap E_2^c)}{P(E_2^c)}
=\frac{0,8 \times 0,18}{0.3}=0.48
\end{eqnarray*}
 \end{itemize}

  Something, however,  is not correct in this set up:
 we have
$P(E_3 \cap E_2^c) = P(E_3|E_2^c)P(E_2^c)
=0.18$ so that
\begin{eqnarray} \label{formula of contradiction}
P(E_3 \cap E_2 )= P(E_3)- P(E_3 \cap E_2^c)=0.77
> 0.7= P( E_2)
 \end{eqnarray}
 which contradicts monotonicity of $P$. Alternatively, again from \eqref{calc1},
 $P(E_3 \cup E_2 )=0.88 < 0.95  = P(E_3)$.

We conclude that the pre-environment described here is contradictory;
the contradiction
  does not appear in the intended calculations, which were all obtained by
 sound applications of inference rules, but only with a careful choice of the
 events to examine.

 As a consequence of the contradiction, for every  statement,
  including  the one in
 \eqref{calc1}, the
 opposite can  also be inferred: as $P(E_2 \cup E_3) \geq P(E_3)=0.95$,
\begin{eqnarray}  \label{calc2}
P(B)
&=&1-(0,8 \times P(E_2 \cup E_3)) \leq 1-0,76=0.24 \neq 0.296.
\end{eqnarray}

\bigskip

\section{Analysis of the contradictory setup} \label{analysis}

The set up described in Appendix \ref{Contradictory} proposes a 
probability environment, which requires admissibility, a decidable question
by Corollary \ref{equiv2}.  As an illustration, we determine once again that the
requirements in  Appendix \ref{Contradictory} are contradictory directly
by the algebraization method of Lemma \ref{equiv2} and Corollary \ref{equiv2}.

First, observe that there are three events involved, so that $\mathbb A= \{E_1, E_2, E_3\}$.
Then select a real variable for each of the boolean combinations 
 on which conditions are given; it is convenient to use the following notation:
 $$
 x_{{\beta_1}, {\beta_2}, {\beta_3}}=
 P(E_1^{\beta_1} \cap E_2^{\beta_2} \cap E_3^{\beta_3})
$$
 where $\beta_m \in \{-1,0,1\}$ and $A^{-1}= A^C,
 A^0=\Omega, A^1=A$.
 The equations expressing the requirements of the probability environment are the
 following, where we have assumed that the claimed independence is 
 actually the full independence
 of the algebra generated by $E_2$ and $E_3$ from the algebra generated by $E_1$ (as it makes sense 
 that the wheather is independent from any combination of human productions):
\begin{eqnarray}
 \begin{cases} \label{eqAngTang}
 x_{1,0,0}& =0.8  \\ 
 x_{0,1,0}& =0.7 \\ 
 x_{0,0,1}& =0.95  \\
 x_{1,1,1}& =x_{1,1,0}\cdot x_{0,0,1}\\ 
 x_{1,-1,1}& =x_{1,-1,0}\cdot x_{0,0,1}\\ 
 x_{-1,1,1}& =x_{-1,1,0}\cdot x_{0,0,1}\\ 
 x_{-1,-1,1}& =x_{-1,-1,0}\cdot x_{0,0,1}\\ 
 x_{0,-1,1}& =0.6 \cdot x_{0,-1,0}
  \end{cases} 
\end{eqnarray}
With the trivial substitution $x_{0,-1,0}= 1 -x_{0,1,0}$ there are $12$ variables in the 
system. 

Now make the change of variables 
$$x_{{\beta_1}, {\beta_2}, \beta_3}
= \sum_{\alpha_m \in \{\beta_m + |\beta_m|-1,\beta_m - |\beta_m|+1\}, 
m=1,2,3}
y_{\alpha_1,\alpha_2,\alpha_3}
$$
where $y_{\alpha_1,\alpha_2,\alpha_3}$ indicates the unknown
probability of $E_1^{\alpha_1} \cap E_2^{\alpha_2} \cap E_3^{\alpha_3}$.
After substitution and the inclusion of the conditions on the $y_{\alpha}$'s the 
system has $9$ equations of degree either $1$ or $2$, and $8$ inequalities.
As discussed here below there is no solution, indicating that the requirements are not admissible; a
certificate that the system has no solution is in Appendix \ref{Example3}.

\bigskip

One can actually wonder if it was the value $0.95$ required for $P(E_3)$ which
created a problem. In fact, one can leave $x_{0,0,1}$ as an indeterminate and solve
the system in $y_{\alpha}$
for the other $7$ variables. The result, before
imposing the  condition that $y_{\alpha}\geq 0$ for all $\alpha$'s, is:
$$
 \begin{cases} \label{eqAngTang}
 y_{-1,1,1}& =(21-50 y_{1,1,1})/50  \\  
 y_{1,-1,1}& =(12-25 y_{1,1,1})/25\\ 
 y_{1,1,-1}& =2 y_{1,1,1}/3\\ 
 y_{-1,-1,1}& =(-3+10 y_{1,1,1})/10\\ 
  y_{-1,1,-1}& =(21-50 y_{1,1,1})/75\\ 
 y_{1,-1,-1}& =(24-50 y_{1,1,1})/75\\
 y_{-1,-1,-1}& =(-3+10 y_{1,1,1})/15.
  \end{cases} 
$$
With the nonegativity condition one has $P(E_1 \cap E_2 \cap E_3)=y_{1,1,1} \in [3/10, 21/50]$,
and $P(E_3)= y_{1,1,1}+y_{-1,1,1}+y_{1,-1,1}+y_{-1,-1,1}=3/5$. This amounts to the
cylindrical decomposition according to Tarsky-Seidenberg reduction theorem, although it is
better computed by directly solving the equations first.

In conclusion, only the value $P(E_3)=3/5$ is admissibile in this set up.

 %%%%%%

 \section{Example} \label{Example}
 
 simple situation in which our theory applies.
\begin{ex}  \label{boundingwithlessevents}
Show that if five equiprobable collectively independent events are such that 
the probability of each exceeds that of the overall intersection by $0.5$, then
the probability that  at least one event occurs
can be  bounded by the sum of the
probabilities of any three of them (instead of all five as would follow from subadditivity). 
Show also that the probabilities of two of them are not enough in the case above, but they are if
the excess is
$0.55$ instead of $0.5$.
\end{ex}
\begin{incsol}
Let $A_i$, $i=1, \dots, 5$ indicate the five events, and let $a= P(A_i)- P(\cap_{i=1}^5 A_i)$
be the indicated excess. We start from $a=0.5$. Denoting
$P(A_i)=z$,  by inclusion-exclusion, independence
and equiprobability,
the probability that at least one event occurs satisfies
\begin{eqnarray} \label{conditions1}
 p(z,a)&=&P(\cup_{i=1}^5 A_i) \nonumber  \\
&=& \sum P(A_i)- \sum_{i_1\neq i_2} P(A_{i_1} \cap A_{i_2})+
\sum_{i_1\neq i_2 \neq i_3} P(A_{i_1} \cap A_{i_2}\cap A_{i_3}) \nonumber \\
&& \quad \quad -\sum_{i_1\neq i_2 \neq i_3\neq 1_4} P(A_{i_1} \cap A_{i_2}\cap A_{i_3}\cap A_{i_4})+P(\cap_{i=1}^5 A_i)  \\
&=& 5 P(A_1)- 10 P(A_1)^2+10 P(A_1)^3 -5  P(A_1)^4+ P(A_1)^5 \nonumber \\
&=& 6 P(A_1)- 10 P(A_1)^2+10 P(A_1)^3 -5  P(A_1)^4 -a  \nonumber \\
&=& 6 z- 10 z^2+10 z^3 -5  z^4 -a.
\nonumber
\end{eqnarray}
\begin{enumerate}
\item We are then asked to prove that $p(z,0.5)-3z=3z- 10 z^2+10 z^3 -5  z^4 -0.5 \leq 0$
for $z \in [0,1]$, which
is easily shown by simple calculations. In fact, 
$p(z,0.5)-3z \leq  3z- 10 z^2+10 z^3 -3  z^4 -0.5=
z(1-z)(3- 7 z+3 z^2)-0.5 $,
so it is sufficient to show that $q(z)=z(3- 7 z+3 z^2) - 0.5\leq 0$, but
$q(0), q(1) <0$, and $q(z)$ computed at the root of $q'(z)=0$ is negative.

\item On the other hand, $p(0.3, 0.50)= 0.6295  > 2 \times 0.3$, so that $p(z,0.5) \leq 2 z$ does not
hold for all $z \in [0,1]$. Therefore, a bound by the probabilities of two events is not sufficient.

\item Finally, we see that once again $r(z)=p(z, 0.55)-2z <0$ for 
$z \in [0,1]$; in fact,
$r(0), r(1) <0$, $r''(z) <0$ as it is an irreducible second order polynomial, 
and if $z^*$ indicates the only real root  of $r'(z)=0$, computable by solving a third degree polynomial,
then $r(z^*) <0$. So, if the excess $a$ equals $ 0.55$ then the probability of the
union can be bounded by the sum of two of the probabilities of the single events.
\end{enumerate}

\end{incsol}

\begin{impsol}
It is possible to verify the existence of a probability environment 
$(\Omega, \mathbb A, P)$ satisfying 
the requirementsby the algebraization method in  Corollary 
 \ref{equiv2}. Clearly, for this simple case there is plenty of shortcuts, 
 but, by way of exemplification, let's follow the abstract scheme.

There are 
no requirements on $\Omega$ and $\mathbb A$ is required to be of size $5$.
So let $\mathbb A= \{A_i, i=1, \dots, 5\}$, let $a= P(A_i)- P(\cap_{i=1}^5 A_i)$
be the indicated excess, and consider the
boolean combinations $B_{\beta}= \cap_{i \in \beta} A_i$ for
$\beta \subseteq \{1,2\dots, n\}$.
The stated requirements are then expressed by the following system
\begin{eqnarray}  \label{syst1ex1}
\begin{cases} 
x_{\{1,2,3,4,5\}}+a -x_{\{i\}}=0, \quad i=1, \dots, 5 \\
x_{\beta} - \prod_{i \in \beta} x_{\{ i \} }=0, \quad \beta \subseteq \{1,2\dots, n\} \\
x_{\{i \} } - x_{\{j \} }=0, \quad i,j=1, \dots, 5, i \neq j.
\end{cases}
\end{eqnarray}
Perform the change of variables $x_{\beta} = \sum_{\alpha: \alpha_i= 1 \text{ for }
i \in \beta} y_{\alpha}$ and add the normalization and nonnegativity relations to get the system
\begin{eqnarray} \label{syst2ex1}
\begin{cases} 
y_{\{1,1,1,1,1\}}+a -\sum_{\alpha: \alpha_i=1}y_{\alpha}=0, \quad i=1, \dots, 5 \\
\sum_{\alpha: \alpha_i=1 \text{ for }i \in \beta}y_{\alpha} - 
\prod_{i \in \beta} \sum_{\alpha: \alpha_i=1}y_{\alpha}=0, \quad \beta \subseteq \{1,2\dots, n\} \\
\sum_{\alpha: \alpha_i=1}y_{\alpha} - \sum_{\alpha: \alpha_j=1}y_{\alpha}=0,
 \quad i,j=1, \dots, 5, i \neq j \\
 \sum_{\alpha \in \{-1,1\}^5 } y_{\alpha}-1=0.
 
\end{cases}
\end{eqnarray}
The nonemptiness of the semialgebraic set determined by the last system
can be determined by cylindric reduction according to Tarsky-Seidenberg elimination,
or shortened by some substitution. Indicating by $z$
the common value of the $\sum_{\alpha: \alpha_i=1}y_{\alpha}$'s one gets to
\begin{eqnarray}  \label{syst3ex1}
\begin{cases}
z^5+a-z=0\\
\sum_{\alpha: \alpha_i=1 \text{ for }i \in \beta}y_{\alpha} - 
z^{| \beta|} =0, \quad \beta \subseteq \{1,2\dots, n\} \\
 \sum_{\alpha \in \{-1,1\}^n } y_{\alpha}-1=0.
\end{cases}
\end{eqnarray}
For $a=0.5$ one can show by Sturm's theorem,  or similar methods,
that there are indeed solutions in $[0,1]$ (see Remark 
\ref {3.5} below). Assuming $z^*$ is one such solution,
then one can take a concrete probability space with $5$ independent 
events, each with probability $z^*$, which is known to exist; 
in such a space all requirements hold. Therefore, the 
probability environment is well defined and Part $1.$ of the incorrect solution is 
actually
correct.

On the other hand, if  $a=0.55$ then by the same algebraic
methods above, one can see that there is no solution of the first equation
in $[0,1]$ (see Remark 
\ref {3.5} below), and hence the assumptions about the excess being 
$0.55$ are contradictory, and the entire calculation in Part $3.$ of 
the incorrect solution does not make any sense at all.
\end{impsol}
Notice that the the equation $z^5+a-z=0$ cannot be solved by radicals for the 
given values of $a$; hence, the result about the roots is purely existential.

\begin{rem} \label{3.5}
Notice also that Sturm's theorem gives a condition on $a$ for the existence of a solution in $[0,1]$ of
$z^5+a-z=0$. The Sturm sequence in $z=0$ is $a, -1,-a,1-\frac{3125 a^4}{256}$
and in $z=1$ is $a, 4, 4/5 -a, 1-\frac{3125 a^4}{256}$. Hence, there are
two solutions in $[0,1]$ for $a \in [0,\frac{4}{5^{5/4} })$,
one for $a= \frac{4}{5^{5/4} }$ and none for $a > \frac{4}{5^{5/4} }$.
Hence, the requirements are admissible if and only if $a \leq \frac{4}{5^{5/4} } \approx 0.535$.
\end{rem}
\bigskip

There is yet another twist in Part $2.$ of the incorrect solution. There, we are trying to 
derive a negative result from the assumptions, so the
consistency of the assumptions of the probability environment is not
enough; a complete solution of the exercise uses the methods of Corollary \ref{equiv3}

\begin{compsol}(to Example  \ref{boundingwithlessevents}). 
According to the above
corollary, we augment System (\ref{syst1ex1}) by 
the equation $x_{ 1 \vee 2 \vee 3\vee 4 \vee 5} - 2 x_{\{1\} }  > 0$,
where $x_{ 1 \vee 2 \vee 3\vee 4 \vee 5} $ corresponds to 
$P(A_1 \cup A_2 \cup A_3 \cup A_4 \cup A_5)$. After 
the $\bf x $-$ \bf y$ change of variables, and the same substitutions as 
in the improved solution, we get System (\ref{syst3ex1}) 
incremented by the inequality $1-(1-z)^5-2z >0$. Combined
with $z-z^5-0.5=0$ this gives $(1-z)^5/2+z^5 <0$, which is impossible to
satisfy if $z\geq 0$. Since
without the 
additional relation this is a probability environment, it follows from Corollary \ref{equiv3} that 
$P(A_1 \cup A_2 \cup A_3 \cup A_4 \cup A_5) \leq 2 P(A_1)$ holds
 for $a=0.5$, and that also Part $2.$ of the incorrect solution is wrong.

\end{compsol}
The point of the complete solution  to Part $2.$ is that indeed $p(z,0.5) \leq 2 z$
does not hold for all $z \in [0,1]$, but it holds for all the $z$'s for which
the hypothesis make sense.

\section{Example of Dutch Book  } \label{Example2}

\begin{ex}
An incorrect evaluator of the probabilities of two events $A_1$ and
$A_2$ might assume
that: they are independent, 
all their boolean combinations are jointly perceivable,
 $P(A_1|A_2) = 1/2$,
and, finally, that $P(A_1)\neq 1/2$. Setting
$
 x_{{\beta_1}, {\beta_2}}=
 P(A_1^{\beta_1} \cap A_2^{\beta_2})
$ the above equirements  give rise to a system with $4$ terms
 $$
 \begin{cases} \label{syst1indipnonindip}
 x_{1,1}-x_{1,0}x_{0,1}& =0 \\ 
  x_{1,1}-\frac{1}{2} x_{0,1}& =0 \\ 
 x_{0,1}& \neq 0\\ 
 x_{1,0}-\frac{1}{2}& \neq 0.
  \end{cases} 
$$
After substitution with the $y_{\alpha}$'s, one gets a system
with $9$ terms, $m_1=3$ equations, 
$m_2=4$ inequalities, and $m_3=2$ with $\triangleleft$
replaced by $ \neq$.
A polynomial certifying that there is no
solution is
\begin{eqnarray*}
v&=& \sum_{r=1}^{2}  t_r  f_r +
\prod_{r=8}^{9} ( h_r )^{2}
\\ &=&\left((y_{1,1}+y_{-1,1})(y_{1,1}+
y_{1,-1}-\frac{1}{2})\right) 
 \left(y_{1,1} -\frac{}{}(y_{1,1}+y_{1,-1})(y_{1,1}+y_{-1,1})\right)
\\ && \quad 
+
\left(-(y_{1,1}+y_{-1,1})(y_{1,1}+
y_{1,-1}-\frac{1}{2})\right)
 \left(y_{1,1} -\frac{1}{2}(y_{1,1}+y_{-1,1})\right)
\\ &&  \quad +(y_{1,1}+y_{-1,1})^2 (y_{1,1}+y_{1,-1}-\frac{1}{2})^2 \equiv 0.
\end{eqnarray*}
Now following steps 1.-6. in the proof of the Theorem \ref{DB1}
for a fixed set of permutations, we get 
\begin{eqnarray*}
\tilde t_2 \tilde f_2&=&
\left(- \mathbb I_{1,1,(\sigma^{(1,1 )}_1)}  \mathbb I_{1,1,(\sigma^{(1,1 )}_2)}
- \mathbb I_{1,1,(\sigma^{(1,1)}_1)}  \mathbb I_{1,-1,(\sigma^{(1,-1 )}_5)}
+\frac{1}{2} \mathbb I_{1,1,(\sigma^{(1,1 )}_1)}\right.
\\ && \quad \quad \left.
-\mathbb I_{1,1,(\sigma^{(1,1 )}_1)}  \mathbb I_{-1,1,(\sigma^{(-1,1)}_7)}
-\mathbb I_{-1,1,(\sigma^{(-1,1 )}_7)}  \mathbb I_{1,-1,(\sigma^{(1,-1)}_5)}
+\frac{1}{2} \mathbb I_{-1,1,(\sigma^{(-1,1 )}_7)}\right)
\\ && \quad
\cdot
 \left (\frac{1}{2} \mathbb I_{1,1,(\sigma^{(1,1)}_3)}
-\frac{1}{2} \mathbb I_{-1,1,(\sigma^{(-1,1 )}_8)}\right)
\end{eqnarray*}
and $\tilde u$ and $V$ accordingly. One can see that there is no
easy way of controlling the different copies  which are used, except
that of taking all permutations.

\medskip

On the other hand, one can construct a simpler and more direct payoff by 
identifying the events as much as possible with the original 
$A_i$'s, rather than with their expansion in standard normal form, and by a
 clever choice of the copies. Here is a possible form  (in which 
 the number of the copy is indicated in the indices):
\begin{eqnarray*}
V'&=&
\left( \mathbb I_{A_{2},(1) } ( \mathbb I_{A_{1},(3)} -\frac{1}{2}) \right)
\left( \mathbb I_{A_{1}\cap A_2,(2)} -\mathbb I_{A_{1},(4)} \frac{}{}
\mathbb I_{A_{2},(2)}\right)
\\ && \quad 
+ \left( -\mathbb I_{A_{2},(1) } ( \mathbb I_{A_{1},(3)} -\frac{1}{2}) \right)
\left( \mathbb I_{A_{1}\cap A_2,(2)} -\frac{1}{2}\mathbb I_{A_{2},(2)}\right)
\\ && \quad 
+\mathbb I_{A_{2},(1)}
  ( \mathbb I_{A_{1},(3)} -\frac{1}{2})( \mathbb I_{A_{1},(4)} -\frac{1}{2}).
\end{eqnarray*}
Since in each factor of each product there appear only events belonging to copies
which are different from those appearing in the other factors
(of the same product),
independence can be used to show that $E(V')>0$ for 
the incorrect evaluator; on the other hand, $V'\equiv 0$ as checked by
simple algebraic expansion.

\end{ex}

\section{A short cut Dutch Book} \label{Example3}

\begin{ex} \label{DutchBook for main example}
According to Corollary \ref{Cor4.3}, it is possible  to use a known contradiction
to build a Dutch Book. We do it for the 
 contradiction found at the end of Appendix \ref{Contradictory} for the problem presented there,
 using the formalization
in Appendix \ref{analysis}.
The chain of deductions leading to the contradiction in \eqref{formula of contradiction},
in the notation of Appendix \ref{analysis} 
is the following: 
\begin{eqnarray*}
x_{0,1,1}-(x_{0,0,1}-x_{0,-1,1})=0 \\
x_{0,0,1}-0.95=0 \\
x_{0,-1,1}-0.6 x_{0,-1,0}=0\\
x_{0,-1,0}-(1-x_{0,1,0})=0\\
x_{0,1,0}-0.7=0\\
x_{0,1,0} - x_{0,1,1} \geq 0\\
x_{0,1,0}-0.7=0
\end{eqnarray*}
Notice that the fifth and the seventh equations coincide, as this relation appears twice in 
the reasoning expressing the contradiction. Now, we combine these relations
with suitable coefficients
in such a way that the single variables are telescopically canceled; 
we can use any real coefficient for all relations except the 
sixth, which needs a nonnegative coefficient. We 
indicate directly the random variables, as all relations are linear and there
is no need of using copies; moreover, we directly replace the ${\bf x}$ variables by
a random variable, as we already expressed the properties of probability
in equations number $1,4$ and $6$. $\mathbb I_{i,j,k}$ is the random variable substituting
the variable $x_{i,j,k}$. The payoff of the Dutch Book is
\begin{eqnarray*}
\tilde v=&& \frac{1}{0.07}[
(\mathbb I_{0,1,1}-(\mathbb I_{0,0,1}-\mathbb I_{0,-1,1})) + (\mathbb I_{0,0,1}-0.95)\\
&& -(\mathbb I_{0,-1,1}-0.6 \mathbb I_{0,-1,0})- 0.6(\mathbb I_{0,-1,0}-(1-\mathbb I_{0,1,0}))\\
&&
+0.6(\mathbb I_{0,1,0}-0.7)+(\mathbb I_{0,1,0} - \mathbb I_{0,1,1}) - (\mathbb I_{0,1,0}-0.7)=-1.
\end{eqnarray*}

\end{ex}

\bigskip

\section{Paradoxes}  \label{paradoxes}
We make a brief reference  to the so called paradoxes,  many
of which have been proposed concerning the foundations of probability
theory
(see, for instance, \cite{E2012, Ha}). 
The point in most of the paradoxes in probability is that  one ether sets up
contradictory collection of assumptions, or is forced by too rigid axiom systems
to assign probabilities where there is no natural way of doing it. Both
problems are addressed and solved by probability environments.

As just one example of how our formulation can deal with paradoxical settings, 
we consider Humphrey's paradox \cite{Hu} and its variants \cite{Lyon(2014)},
which turn out to be troubling for most foundations of probability. 
The paradox gives reasons to make assumptions about some events 
$B_{t_0}, I_{t_1}, T_{t_2}$, namely
\begin{itemize}
\item [(i)] $P (T_{t_2}|I_{t_1} \cap B_{t_0} ) = p > 0,$
\item [(ii)] $1 > P (I_{t_1} | B_{t_0} )  = q > 0,$
\item [(iii)] $P (T_{t_2}| \neg I_{t_1} \cap B_{t_0} )  = 0$;
\end{itemize}
each variant gives then reasons to make an assumption about
 $P (I_{t_1} |T_{t_2} \cap B_{t_0} )$,
for instance that it equals $P (I_{t_1} | \neg T_{t_2} \cap B_{t_0} )
=P (I_{t_1} |  B_{t_0} )$ in the first variant.
It can be easily seen that the statements of the first three variants of the paradox 
are simply setting forth an inconsistent collection of requirements,
 easily detectable by algebraization. In each case,  a Dutch Book can be produced
 against the believer of such assumptions: in the first variant, for instance,
 it can be obtained as follows. Let $\mathbb I_{A,j}, j=1,2$ be
 the indicator function of the event $A$ in copy $j$, for  two independent copies
 of the events. Then, by omitting the (mathematically irrelevant) indications of 
 the times $t_0, t_1, t_2$, let
 \begin{eqnarray*}
 V&=&\mathbb I_{B,1}  \mathbb I_{T,2} \mathbb I_{B,2} (1-q)
 +( \mathbb I_{T,2}  \mathbb I_{B,2}  \mathbb I_{I,1} \mathbb I_{B,1}
 - \mathbb I_{B,1}  \mathbb I_{T,2}  \mathbb I_{I,2} \mathbb I_{B,2})\\
 && \quad +( \mathbb I_{T,2}  \mathbb I_{B,2} q \mathbb I_{B,1}
 -  \mathbb I_{T,2}  \mathbb I_{B,2}  \mathbb I_{I,1} \mathbb I_{B,1})
 -   \mathbb I_{B,1} \mathbb I_{T,2} (1- \mathbb I_{I,2}) \mathbb I_{B,2} ;
  \end{eqnarray*}
  we have
   \begin{eqnarray*}
 E(V)&=&P(B) P(T \cap B) (1-q)\\
 &&
 +( P(T\cap B) P(I \cap B)
 - P(B) P(T \cap I \cap B) )\\
 && +( P(T \cap B) q P(B)
 - P(T \cap B) P(I\cap B)  )
 -  P(B) P(T \cap I^c \cap B)\\
 &=&P(B) P(T \cap B) (1-q)\\
 &&
 +( P(T\cap B) P(I \cap B)
 - P(B) P(T  \cap B) )\\
 && +( P(T \cap B) q P(B)
 - P(T \cap B) P(I\cap B)  ).
  \end{eqnarray*}
  From the requirements we know
  $P(T \cap B) \geq P(T \cap I \cap B) = p P(I \cap B) >0$
  and $P(B)>0$; the first term is strictly positive, and all the other terms in the r.h.s. of the last
  equation are $0$; hence from the requirements, $E(V) > 0$.
  On the other hand, a simple expansion shows that $V \equiv 0$. Therefore,
  $V$ is a weak Dutch Book, witnessing the inconsistency of the
  requirements.

 The fourth variant of the paradox
suggests to leave  $P (I_{t_1} |T_{t_2} \cap B_{t_0} )$ as "undefined". 
We can translate this assumption into the setting of probability environments by 
stating that no requirement is made on such probability. In such case,
the other requirements are consistent, and the paradoxical nature
of the example disappears; a probability environment exists,
and  the last probability can then be computed
as necessary consequence of the other requirements in such environment,
necessarily taking the value $P (I_{t_1} |T_{t_2} \cap B_{t_0} )=1$
(as easily seen by algebra or by Bayes formula).

All of the most relevant formalizations of Probability Theory seem to be affected 
by Humphrey's paradox, as they force us to assign a value to $P (I_{t_1} |T_{t_2} \cap B_{t_0} )$
before hand, hence entering in one of the three contradictory variants. Only
R\'enyi's axiom system allows to leave such probability as undefined, 
and hence is able to "respond" to the paradox,  reaching the same conclusion
as with probability environments.

In a further extension of the paradox (see \cite{Lyon(2014)} Section 6.6), though, 
one can set up things in such a way that also R\'enyi's axiom system
would be forced to assign values which should remain undefined;
the issue is immaterial for probability environments, which then solve 
extension of the paradox  as well.

.

\end{appendices}

\section*{Aknowledgments}
The author would like to thank P. Dai Pra, M. Maggesi, R. W. J. Meester, D. Mundici and C.C.A. Sastri for valuable discussions and comments.


\begin{thebibliography}{A}


 






\bibitem[AT2006]{AT} A. H-S. Ang, W. H. Tang (2006), Probability concepts in engineering, Wiley ed., 2nd edition

\bibitem[A1989]{Arn} V. I. Arnold, Mathematical methods of classical mechanics, Second Edition, Graduate. Texts in Mathematics 60, Springer-Verlag, New York-Berlin, 1989.

\bibitem[AP2009]{AP}  D. Atkinson, J. Peijnenburg (2009) Justification by an Infinity of Conditional Probabilities. Notre Dame Journal of Formal Logic 50,183Ð93.

\bibitem[BPR06]{BPR06} Saugata Basu, Richard Pollack, and Marie-Françoise Roy. 2006. Algorithms in Real Algebraic Geometry (Algorithms and Computation in Mathematics). Springer-Verlag New York, Inc., Secaucus, NJ, USA.

\bibitem[B03]{B03} P. Biane, Free probability for probabilists, Quantum Probability
Communications, Volume XI, 55-72, World Scientific (2003)

\bibitem[B2010]{B2010} N.H. Bingham (2010), Finite additivity versus countable additivity:
De Finetti and Savage. Electronic J. History of Probability and Statistics 6.1


\bibitem[BCR1998]{BCR}   Bochnak, Jacek; Coste, Michel; Roy, Marie-Fran\c{c}oise. Real Algebraic Geometry. Translated from the 1987 French original. Revised by the authors. Ergebnisse der Mathematik und ihrer Grenzgebiete (3) [Results in Mathematics and Related Areas (3)], 36. Springer-Verlag, Berlin, 1998.


\bibitem[GB1901]{GB}  G. Bohlmann Lebensversicherungsmathematik, Encyklopadie der mathematischen Wissenschaften, Bd I, Teil 2, Artikel I D 4
b (1901), 852-917

\bibitem[BPT2013]{BPT} G. Blekherman, P. A. Parrilo, R. R. Thomas (eds.), Semidefinite Optimization and Convex Algebraic Geometry, MOS-SIAM (2013).

\bibitem[B54]{B54} Boole, George (1854). An Investigation of the Laws of Thought on Which are Founded the Mathematical Theories of Logic and Probabilities. Macmillan. Reprinted with corrections, Dover Publications, New York, NY, 1958.

\bibitem[CG2016]{CG} Catalano, C., Gandolfi, A. (2016): Partially independent random 
variables. Preprint.

 \bibitem[C10]{C10} Chichilnisky Graciela (2010) The Foundations of Statistics with Black Swans. Mathematical Social Sciences, 59, 184-192.

\bibitem[LC2004]{LC}  L. Corry, David Hilbert and the Axiomatization of Physics (1898-1918), Kluwer Academic Publishers, Dordrecht (2004)

\bibitem[DF1949]{DF1949} de Finetti, Bruno (1949): Sull'impostazione assiomatica del calcolo delle probabilità. Annali Triestini {\bf 19} 29-81.

\bibitem[DF1993]{DF1}  de Finetti, B. (1993). Sul significato soggettivo della probabilit\`a. (in Italian), Fundamenta Mathematicae, 17, 298-329, 1931. Translated into English as 
"On the Subjective Meaning of Probability (pp. 291 - 321)".
 In P. Monari and D. Cocchi (Eds.), Probabilit\`a e Induzione. Bologna: Clueb.  
 % Ricostruisce le prop della prob dai books, e poi parla di altre additivita' (non credo interessante)

\bibitem[DF1980]{DF2} de Finetti, B. (1980). La pr\'evision: ses lois logiques, ses sources subjectives. Annales de l'Institut H. Poincar\'e, 7 (pp. 1-68), 1937. Translated into English by Henry E. Kyburg Jr., as 
Foresight: Its Logical Laws, its Subjective Sources. In H. E. Kyburg Jr. and H. E. Smokler (Eds.), Studies in subjective probability. Wiley: New York, 1964. Second edition published by Krieger, New York, pp. 53-118.
% Di nuovo ricostruisce le prop della prob dai d. books, poi pone il problema di dare le probabilita' di eventi risolvendo con i costituenti, come gia' fatto in "Problemi determinati". Poi deduce la legge della probabilita' condizionale dai Dutch Books, essendo lineare. 
%It also defines independence from conditional probabilities (p. 15 top) without realizing that he has introduced a non linear notion where he was only using linear notions to define conditional probabil.
%The rest is on equivalent probabilities or random numbers (???)

% Attention: De Finetti uses coherence, hence dutch books, only to derive probability axioms, not further to check for additional conditions. These are assumed and check consistent a la Boole, with linear calculus.



\bibitem[DF1974]{DF3}  de Finetti, B.(1974). Theory of Probability, Vol.1. Chichester: Wiley.

\bibitem[DS08]{DS08} Delbaen F. and W. Schachermayer, ?The mathematics of Arbitrage? Springer Finance, 2008.

\bibitem[DS1965]{DS1965} Dubins, L. E.; Savage, L. J. (1965). How to gamble if you must: inequalities for stochastic processes. McGraw-Hill series in probability and statistics. McGraw-Hill.


\bibitem[D74]{D74}  L. E. Dubins, On Lebesgue like extensions of finitely additive measures, Ann. Probab. 2 (1974), 456-463.

\bibitem[E2012]{E2012} William Eckhardt (2012):
Paradoxes in Probability Theory. Springer.



\bibitem[TF1973]{TF1973} Fine, T (1973):  Theories of Probability: An Examination of Foundations, 
New York, Academic Press.


\bibitem[F86]{F86} Peter C. Fishburn (1986): The Axioms of Subjective Probability. Statistical Science 1(3), pp. 335-358.




\bibitem[FL15]{FL15} Tobias Fritz, Matthew Leifer: Plausibility measures on test spaces	arXiv:1505.01151 [quant-ph] (2015).

\bibitem[FR72]{FR72} D. J. Foulis , C. H. Randall (1972): Operational Statistics. I. Basic Concepts. Journal of Mathematical
Physics 13(11), pp. 1667-1675,

\bibitem[G1931]{G}  Kurt G\"odel (1931), "\"Uber formal unentscheidbare Sätze der Principia Mathematica und verwandter Systeme, I." Monatshefte f\"ur Mathematik und Physik 38, 173-198.


\bibitem[Ha1965]{Ha1}  Hailperin, T. (1965) Best possible inequalities for the probability of a logical
function of events. American mathematical monthly, vol. 72, pp.
343-359.

\bibitem[Ha1976]{Ha2}  Hailperin, T. (1976/1986). Boole's Logic and Probability. North Holland.

\bibitem[HJ2000]{HJ} P. Hansen and B. Jaumard (2000):
Probabilistic satisfiability. In Handbook of Defeasible
Reasoning and Uncertainty Management Systems, vol.5.
Springer.

\bibitem[HL85]{HL85} Hurd, A.E. and Loeb, P.A.: An introduction to nonstandard real analysis, London, Academic Press, 1985

\bibitem[HM2001]{HM} Hazewinkel, Michiel, ed. (2001), "Disjunctive normal form", Encyclopedia of Mathematics, Springer,


\bibitem[DH1905] {DH1} David Hilbert, Logische Prinzipien des Mathematischen Denkens, Manuskript von Hilberts Vorlesung von 1905,
Bibliothek des Math.Inst. Univ. G¬ottingen. aufgezeichnet von E.Hellinger, 1905.



\bibitem[E2006] {E} Richard L. Epstein Classical Mathematical Logic: The Semantic Foundations of Logic, Princeton University Press.


\bibitem[Ha2013]{Ha} Olle H\"aggstr\"om, (2013):
Book Review: Paradoxes in
Probability Theory, Notices of the AMS, 329-331.



 \bibitem[Hu1985]{Hu} Humphreys, P. (1985). Why Propensities Cannot Be Probabilities. Philosophical Review, 94,
557-70

 \bibitem[K02]{K02}   O. Kallenberg, Foundations of Modern Probability,  Second Edition, Springer, 2002.
 
  \bibitem[K84]{K84} Kamiski, Andrzej. "On the Rényi theory of conditional probabilities." Studia Mathematica 79.2 (1984): 151-191
 
 \bibitem[K82]{K82} R. Karandikar, A general principle for limit theorems in finitely
 additive probability. Trans. Amer. Math. Soc. Vol. 273, No. 2 (1982).


\bibitem[Ke1921]{Ke} Keynes, John Maynard (1921), Treatise on Probability, London: Macmillan and Co.



\bibitem[K1956]{K} Kolmogorov, A. N. (1956): Foundations of the Theory of Probability. Second
edition, Chelsea Publ., New York.

\bibitem[Kr1964]{Kr} Krivine, J.L. (1964). "Anneaux  pr\'eordonn\'es ". Journal d'analyse mathŽmatique 12, 307-326


\bibitem[UK2011]{UK2011} U. Krengel, ON THE CONTRIBUTIONS OF GEORG BOHLMANN
TO PROBABILITY THEORY, {\it Electronic Journal for History of 
Probability and Statistics} {\bf 7 No. 1}, 1-13 (2011).



\bibitem[L10]{L10} J.-B. Lasserre (2010): Moments, Positive Polynomials and Their Applications,
Imperial College Press, London.

\bibitem[LPR2014]{LPR} H. Lombardi, D. Perrucci, M.-F. Roy (2014):
An elementary recursive bound for effective Positivstellensatz and Hilbert 17-th problem, Preprint,
http://arxiv.org/abs/1404.2338

\bibitem[Ly2010]{Ly} Lyon, Aidan (2010): Philosophy of Probablilty. In Fritz Allhoff (ed.), Philosophies of the Sciences: A Guide. Wiley-Blackwell.

 \bibitem[Ly2014]{Lyon(2014)} A. Lyon (2014) :
From Kolmogorov, to Popper, to Renyi: There's No Escaping Humphreys' Paradox (When Generalised) in { \it Chance and Temporal Asymmetry }, Alastair Wilson (ed.), Oxord University Press.


\bibitem[N1986]{N} Nils Nilsson (1986) Probabilistic logic. Artificial
Intelligence, 28(1), 71-87.


 \bibitem[Pa2004]{Pa} Parrilo, Pablo A. "Sum of squares programs and polynomial inequalities." SIAG/OPT Views-and-News: A Forum for the SIAM Activity Group on Optimization. Vol. 15. No. 2. 2004.
 
  \bibitem[Pit1989a]{Pit1989a} I. Pitowsky I (1989) From George Boole to John Bell: the origin of Bell?s inequality. In: Kafatos M (ed.)
Bell?s theorem, quantum theory and the conceptions of the universe. Kluwer, Dordrecht, pp 37?49
 
  \bibitem[Pit1989b]{Pit1989b} Pitowsky, I., 1989, Quantum Probability-Quantum Logic (Lecture Notes in Physics, Volume 321), Berlin: Springer-Verlag.
  
 \bibitem[Pit1994]{Pit1994}Pitowsky, I. (1994). George Boole's 'conditions of possible experience' and the quantum puzzle. The British Journal for the Philosophy of Science, 45, 95?125

%%%%% citare il lavoro qui sotto %%%%%%

 \bibitem[P1905]{P} H. Poincare (1905), Les mathematiques et la logique. Revue de metaphysique et de morale, 815-835.
 
  \bibitem[Po1938]{Po} Popper, K. R. (1938). A Set of Independent Axioms for Probability. Mind,
47(186), 275-277.


 \bibitem[PS76]{PS76} R. A. Purves and W. D. Sudderth, Some finitely additive probability, Ann. Probab. 4 (1976), 259-276.


 \bibitem[RS2007]{RS2007} M. Rédei, S.J. Summers (2007): "Quantum probability theory" 
            Studies in the History and Philosophy of Modern Physics 38, 390-417.
            

 \bibitem[R1955]{R}  R\'enyi, A. (1955). On a New Axiomatic Theory of Probability. Acta Mathematica
Academiae Scientiarum Hungaricae, 6:286?335.
 

\bibitem[S1976]{D} Shafer, Glenn; A Mathematical Theory of Evidence, Princeton University Press, 1976.

  
 
  \bibitem[S1974]{S}  Stengle, G. (1974). "A Nullstellensatz and a Positivstellensatz in Semialgebraic Geometry". Mathematische Annalen 207 (2), 87-97

  \bibitem[ST1943]{ST}  Shohat, J. A.; Tamarkin, J. D. (1943): "The Problem of Moments", New York: American mathematical society.

 \bibitem[S97]{S97} Stinchcombe, M. B. Countably additive subjective probabilities. Rev. Econom. Stud. 64, 1 (1997), 125-146.
 
 \bibitem[S16]{S16} Stinchcombe M. B. Objective and subjective foundations for multiple priors Journal of Economic Theory. 165: 263-291. (2016).
 
\bibitem[T44]{T44}  Tarski, A. 1944, ?The semantic conception of truth?, Philosophy and Phenomenological Research, 4 (3): 341?376.

%%%%% citare il lavoro qui sotto %%%%%%

\bibitem[TvD1988]{TvD}  Anne Sjerp Troelstra, Dirk van Dalen, "Constructivism in Mathematics: An Introduction, Volume 1", 1988.


\bibitem[T2011]{T} Tao, T., Topics in random matrix theory, American Mathematical Society, Providence, RI, 2011

\bibitem[T2012]{T2012}  Tao, T. (2012, April 2). A cheap version of nonstandard analysis
 [Blog post]. Retrieved from https://terrytao.wordpress.com/2012/04/02/a-cheap-version-of-nonstandard-analysis/



\bibitem[TZ2012]{TZ} K. Tent, M. Ziegler (2012): A Course in Model Theory, Lecture Notes in Logic, vol. 40, Cambridge
University Press, Cambridge.


\bibitem[V2012]{Va}
J.  Väänänen (2012): Second order logic or set theory? Bull. Symbolic Logic 18, no. 1, 91--121


\bibitem[V2016]{DB} S. Vineberg: Dutch Book Arguments, Stanford Encyclopedia of 
Philosophy, online publication: http://plato.stanford.edu/entries/dutch-book/

\bibitem[VP94]{VP94} Plato, Jan von (1994): Creating Modern Probability. Cambridge: Cambridge University Press.

\bibitem[YH52]{YH52} Yosida, K., and Hewitt, E. Finitely additive measures. Trans. Amer. Math.
Soc. 72 (1952), 46-66.

    
 \bibitem[W09]{W09} Alexander Wilce (2009): Test spaces. In K. Engesser, D. M. Gabbay, D. Lehmann, editors: Handbook of
quantum logic and quantum structures: quantum logic, chapter 8, Elsevier, pp. 443-549


\end{thebibliography}
\end{document}